\newtheorem{Theorem}{Theorem}[section]
\newtheorem{Remark}[Theorem]{Remark}
\newtheorem{Lemma}[Theorem]{Lemma}
\newtheorem{Proposition}[Theorem]{Proposition}
\newtheorem{Definition}[Theorem]{Definition}
\def\R{\mathbb{R}}
\def\N{\mathbb{N}}
\def\d{\partial}
\def\dd{\partial^2}
\def\g{\gamma}
\def\G{\Gamma}
\def\l{\lambda}
\def\e{\varepsilon}
\def\a{\alpha}
\def\w{\omega}
\def\t{\tau}
\def\lf{\left(}
\def\rg{\right)}
\def\p{\cdot}
\def\bt{\bar{t}}
\def\cv{\mathcal{V}}
\def\vn{\vec{n}}
\newcommand{\Rj}[1]{\mathbb{R}^{ #1}}
\newcommand{\f}[2]{\frac{#1}{#2}}
\newcommand{\I}[2]{\int_{#1}^{#2}}
\begin{document}

\title{\bf NON EXISTENCE OF TYPE II SINGULARITIES FOR EMBEDDED AND UNKNOTTED SPACE CURVES}

\date{\today}

\author{KAREN CORRALES \footnote{\textbf{Key Words:} Space curves, Curve Shortening Flow, Total Curvature, Isoperimetric Inequality, Minimal Surfaces.}\\
}
\date{\small}

\maketitle \vspace{-1cm}


\begin{abstract}
In this paper we prove that a certain class of embedded unknotted curves in $\mathbb{R}^3$ evolving  under curve shortening flow do not form singularities Type II before collapsing to a point. Our proof uses tools of the minimal surface theory to study a suitable isoperimetric ratio.
\end{abstract}

\section{Introduction}\label{intro}
Let $\g: S^1\!\times\![0,\w)\to \Rj{3}$ be a smooth family of embedded space curves. We say that $\g$ evolves by the curve shortening flow if
\begin{equation}
\label{curveshorteningflow}
\tag{CSF}
\f{\d}{\d t}\g(\p,t)=k(\p,t) N(\p,t),
\end{equation}
where $k(\p,t)$ and $N(\p,t)$ are the curvature and the normal vector of $\g(\cdot,t)$, respectively.

This flow was proposed in 1956 by Mullins to model the motion of idealized grain boundaries. In the context of geometric measure theory Brakke studied weak solutions of the mean curvature flow in 1978, of which the curve shortening flow is the 1-dimensional case. In 1986, Grayson, Gage and Hamilton renewed interest in the curve shortening flow arising from work on planar curves.

A central topic within the subject is to understand the singularities that the curves may develop during the evolution. For instance, the formation of singularities is fully understood for planar curves that are smooth and embedded. In \cite{HeatequationG&H} Gage and Hamilton proved that if the initial curve is closed, convex and embedded in $\R^2$ then the solution to (\ref{curveshorteningflow}) keeps convex during the evolution and when $t$ tends to the first singular time it shrinks to a point. The characterization of singularities of closed embedded curves in the plane was completed by Grayson in \cite{HeatequationG}, where he proved that if the initial curve is closed, embedded and possibly not convex in $\R^2$ then the solution to (\ref{curveshorteningflow}) becomes a convex curve before the first singular time, thus it converges to a point as well. A simplified proof of these theorems was proven by Huisken in \cite{Distanceprincipio} where he used   extrinsic and intrinsic distances to define certain isoperimetric ratios.

In contrast, to study the behavior of curves in $\R^3$ (codimension 2) that evolves by curvature is more difficult than planar curves. For example, they may not remain embedded and inflection points may develop \cite{Ramps}. Consequently, fewer results are known in that context and usually it is necessary to have some preserved quantities.

The main goal of this paper is to study the formation of singularities of solutions to \eqref{curveshorteningflow} when $\gamma(\cdot,t)$ is an unknotted and embedded space curve for every $t\in[0,\w)$. More precisely, we will focus on discarding the formation of Type II singularities under appropriate assumptions (see  \ref{definitionsingularities} for a definition of Type II singularities).

Our main result is the following theorem

\medskip
\noindent {\bfseries Theorem A}
{\it \hspace{.1cm}Suppose that $\g(\p,t)$ satisfies the equation (\ref{curveshorteningflow}) for $t\in[0,\omega)$ and it becomes singular as $t\to\omega$. Assume additionally that the total curvature of $\g(\p,0)$ is less than $4\pi$ and that $\g(\p,t)$ does not converge to a point as $t\to\omega$. Let $X(\p,t)$ be the regular in space-time minimal surface enclosed by $\g(\p,t)$. If the Gaussian curvature $K(\p,t)$ of $X(\p,t)$ is uniformly bounded and $\g(\p,t)$ remains embedded for every $t\in[0,\w)$ with finite singular time $\w$ and does not shrink to a point, then $\g(\p,t)$ does not develop Type II singularities.}

\medskip
This theorem complements the result proved in \cite{distancespace}, which establishes that if the initial curve satisfies that its total curvature is less than $4\pi$, and the singularity formed in the curve shortening flow is of Type I, then the curve shrinks to a round point when approaching the maximum existing singular time.

The proof of Theorem A relies on the fact that curves with total curvature less than $4\pi$ enclose a unique minimal surface (see \cite{Uniquenessminimalsurface}). Hence, for every $\g(\p,t)$ we may consider the surface $X(\p,t)$ with boundary $\g(\p,t)$ and define an isoperimetric ratio in the spirit of \cite{Distanceprincipio}. Specifically, in \cite{Distanceprincipio} G.Huisken studied the formation of singularities of planar curves by comparing the intrinsic distance in the curve (arc-length between the points) with the extrinsic distance on the plane (the usual Euclidean distance). The new approach in this work will be consider the extrinsic distance in $\R^3$ as the length of a piecewise geodesic on the minimal surface $X(\p,t)$ which join two points on $\g(\p,t)$. The main result follows by a contradiction argument that compares the bounds on the isoperimetric ratio of the rescaled sequence of $\g(\p,t)$ and the one of its limit.

\medskip
The organization of the paper is as follows: In section \ref{preliminares}, we will collect basic facts and notation about the curve shortening flow, total curvature, rescaled solution and singularities. We will also state the results that will be used freely in this work. Section \ref{surface} will be devoted to analyze the existence, uniqueness and evolution of the minimal surface $X(\p,t)$ with boundary $\g(\p,t)$. In particular, we will show properties that ensure that, under appropriate rescaling, this family of surfaces converges to a limit minimal surface. Section \ref{sectionradio} contains the definition of the isoperimetric ratio $\mathcal{G}(a,b,t)$ for $\g(\p,t)$, which is key in the proof of the main theorem, and estimates for its spatial and time variations. Finally, using the results in Section \ref{surface} and \ref{sectionradio}, we will prove Theorem A in the last section.

\medskip
{\bf Acknowledgements:} This work is included in the author's doctoral dissertation for the Universidad de Chile. The author wishes to thank her advisor, Mariel S\'aez, for many informative discussions and suggestions in the author's thesis work.

The author is supported by Beca Doctorado Nacional, CONICYT.
\section{Preliminaries}\label{preliminares}
In order to fix notation, the Frenet matrix for a space curve $\g$ with arc-length parameter $s$ will be written as:
\begin{eqnarray}
\label{frenet}
\frac{\d}{\d s} \left(\begin{array}{c}
T\\
N\\
B
\end{array}\right)= \left(\begin{array}{ccc}
0&k&0\\
-k&0&\tau\\
0&-\tau&0
\end{array}\right) \left(\begin{array}{c}
T\\
N\\
B
\end{array}\right);
\end{eqnarray}
where $T,N$ and $B$ are the tangent, normal and binormal vectors, respectively. The quantities $k$ and $\t$ are the curvature and torsion of $\g$.

\begin{Remark}{\rm
Note that the curvature of a space curve is always defined as a non-negative quantity.
}
\end{Remark}

The curve shortening flow is defined as follows
\begin{Definition}
A space curve $\g$ evolves by the {\bfseries \itshape curve shortening flow} if it satisfies the equation (\ref{curveshorteningflow}), where $\g: S^1\times[0,\w)\to\R^3$ is a one-parameter smooth family of curves.
\end{Definition}

\begin{Remark}{\rm
It is important to remark that the equation \eqref{curveshorteningflow} is not invariant under reparametrizations of the family $\g(\p,t)$. However, it is possible to prove that every tangential expression can be obtained by a reparametrization. Therefore, the following equation is geometrically equivalent to \eqref{curveshorteningflow} and it is invariant under parametrizations.
\begin{equation}
\label{CSFgeneral}
\f{\d \g}{\d t}(p,t)\p N(p,t)=k(p,t).
\end{equation}}
\end{Remark}

Short time existence of solutions to this differential equation follows from a general theorem proved in \cite{HeatequationG&H}.
\begin{Theorem}
Let $\g_0$ be a smooth, immersed and closed curve in $\R^3$. There exists $\e>0$ such that solutions $\g:S^1\times[0,\e)\to\R^3$ to \eqref{curveshorteningflow} exist. Furthermore, these solutions are smooth.
\end{Theorem}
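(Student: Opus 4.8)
The plan is to recast the geometric evolution \eqref{curveshorteningflow} as a quasilinear parabolic system for the position vector and then invoke standard parabolic theory. Fixing the parameter $p\in S^1$ of the initial curve, recall that along an arc-length parametrized curve $kN=\d_s^2\g=\d_s T$ and that $\d_s=\f{1}{|\g_p|}\d_p$, so that the geometric right-hand side can be written intrinsically as
\begin{equation*}
k\,N=\f{1}{|\g_p|}\,\d_p\!\lf\f{\g_p}{|\g_p|}\rg=\f{\g_{pp}}{|\g_p|^2}-\f{(\g_p\p\g_{pp})\,\g_p}{|\g_p|^4}.
\end{equation*}
Written in this form, \eqref{curveshorteningflow} is only \emph{degenerate} parabolic: its principal symbol annihilates the tangential direction, reflecting the reparametrization invariance recorded in the Remark following \eqref{CSFgeneral}. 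To remove this degeneracy I would use the standard device of modifying the velocity by a tangential term, replacing \eqref{curveshorteningflow} by the gauge-fixed flow
\begin{equation*}
\f{\d\g}{\d t}=\f{\g_{pp}}{|\g_p|^2}.
\end{equation*}
Since this right-hand side differs from $kN$ only by the tangential vector $|\g_p|^{-4}(\g_p\p\g_{pp})\g_p$, by the Remark it is geometrically equivalent to \eqref{curveshorteningflow}; any solution of the modified equation produces a solution of \eqref{curveshorteningflow} after a $t$-dependent reparametrization of $S^1$, sharing the same regularity.

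Second, I would check that the modified equation is a uniformly parabolic quasilinear system on a short time interval. Its principal part is $|\g_p|^{-2}\,\d_p^2\g$, whose symbol $|\g_p|^{-2}\xi^2$ times the identity on $\Rj{3}$ is strictly positive. Because $\g_0$ is a smooth immersion of the compact manifold $S^1$, we have $|\d_p\g_0|\ge c_0>0$, and by continuity the coefficient $|\g_p|^{-2}$ stays bounded away from $0$ for a short time and for configurations near $\g_0$; the coefficients depend smoothly on $\g_p$. This is exactly the strict parabolicity needed for the linearization about $\g_0$.

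With strict parabolicity established, short-time existence and smoothness follow from the general linear theory of parabolic systems through a contraction argument: one linearizes about $\g_0$, uses Schauder (or $L^p$) estimates for the operator $|\g_p|^{-2}\d_p^2$ to obtain a bounded solution operator, and applies the Banach fixed-point theorem on a sufficiently small interval $[0,\e)$ to solve the quasilinear problem. This is precisely the general existence theorem of \cite{HeatequationG&H} cited above. Parabolic bootstrapping then upgrades the solution to be smooth in both space and time, and undoing the reparametrization returns a smooth solution of \eqref{curveshorteningflow}, as claimed.

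The main obstacle is conceptual rather than computational: it is the degeneracy induced by the reparametrization invariance, which blocks any direct application of parabolic theory. Once the gauge-fixing above renders the system strictly parabolic, the remaining analysis is routine. A secondary point is to verify that the reparametrization converting the modified solution into a genuine solution of \eqref{curveshorteningflow} is itself smooth and defined on the same interval; this follows by solving the associated ordinary differential equation for the reparametrizing diffeomorphisms along the flow.
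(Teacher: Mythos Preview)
Your proposal is correct and follows the standard route to short-time existence for geometric flows: break the reparametrization degeneracy by adding a tangential term (the ``DeTurck trick'' in this setting), obtain a strictly parabolic quasilinear system, and then invoke linear parabolic theory plus a fixed-point argument. This is essentially the argument behind the general theorem in \cite{HeatequationG&H} that the paper cites.

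Note, however, that the paper does \emph{not} give its own proof of this statement: it is quoted as a preliminary result from the literature, with the sentence ``Short time existence of solutions to this differential equation follows from a general theorem proved in \cite{HeatequationG&H}.'' So there is no paper-proof to compare against beyond the citation itself; your outline supplies exactly the content that the cited reference provides.
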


The following theorem proved in \cite{Ramps} states that, regardless of the behavior of $\tau$, bounded curvature $k$ implies long-time existence.
\begin{Theorem}
\label{existencialong}
If the curvature of $\g$ is uniformly bounded on the time interval $[0,\a)$, there exists an $\e>0$ such that $\g(\p,t)$ exists and is smooth on the extended time
interval $[0,\a+\e).$
\end{Theorem}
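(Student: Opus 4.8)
The plan is to show that a uniform bound on the curvature forces uniform bounds on all of its spatial derivatives, so that $\g(\p,t)$ extends smoothly up to time $\a$, and then to restart the flow from the resulting smooth limit curve. Since \eqref{curveshorteningflow} is (after fixing a parametrization) a quasilinear parabolic system for the position vector $\g$, the essential task is to control $\g(\cdot,t)$ in every $C^m$ norm as $t\to\a$. Once the limit $\g(\cdot,\a)$ is known to be a smooth immersed curve, the short-time existence theorem stated above produces a solution on a further interval, and uniqueness of the parabolic flow glues the two pieces into a smooth solution on $[0,\a+\e)$.

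The heart of the argument is the derivation of the higher-order estimates. To avoid the degeneracy of the Frenet frame at inflection points (where $k=0$), I would work not with the scalar curvature and torsion but with the curvature vector $\vec{k}=\d_s T$, which satisfies $|\vec{k}|=k$ and remains smooth across such points. Computing the evolution equations for the arc-length derivatives $\d_s^m\vec{k}$ yields a coupled system of the schematic form
\begin{equation}
\d_t\,\d_s^m\vec{k}=\d_s^{m+2}\vec{k}+ Q_m,
\end{equation}
where $Q_m$ is a universal polynomial in the derivatives $\d_s^j\vec{k}$ with $j\le m$. Using the hypothesis $k\le C$ together with a Bando--Shi type bootstrap---applying the maximum principle to quantities such as $|\d_s\vec{k}|^2$ and absorbing the reaction terms by Hamilton-type interpolation inequalities on the curve---one obtains $\sup_{[0,\a)}|\d_s^m\vec{k}|\le C_m$ for every $m$. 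A point that needs care is that arc-length depends on $t$; working in a fixed parameter $p$ one tracks the speed $v=|\d_p\g|$, which evolves by $\d_t v=-k^2 v$ and is therefore bounded above and below on $[0,\a)$ because $k$ is bounded, so derivatives in $s$ and in $p$ are comparable.

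With these bounds in hand, the uniform control of all $\d_s^m\vec{k}$ together with $|\d_t\g|=k\le C$ shows that $\g(\cdot,t)$ is uniformly Cauchy in each $C^m$ norm as $t\to\a$; hence it converges to a smooth limit curve $\g(\cdot,\a)$. The uniform lower bound on the speed guarantees that the limit is still an immersion (and embedded, if the $\g(\cdot,t)$ are), so the short-time existence theorem applies with initial datum $\g(\cdot,\a)$ and yields a smooth solution on $[\a,\a+\e)$ for some $\e>0$. Concatenating this with the original flow and invoking uniqueness gives the desired smooth extension to $[0,\a+\e)$.

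The main obstacle is establishing the derivative bounds uniformly up to $t=\a$ in codimension two. For planar curves this is classical, but in $\R^3$ the scalar torsion $\t$ may blow up even while $k$ stays bounded, precisely because the Frenet frame degenerates where $k=0$; the remedy is to phrase every estimate in terms of the curvature vector $\vec{k}$ and its arc-length derivatives, for which no such degeneracy occurs. Verifying that the terms $Q_m$ are genuinely of lower order and can be absorbed by interpolation is the technical crux of the proof.
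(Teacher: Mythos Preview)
The paper does not actually prove this theorem: it is stated as a result taken from \cite{Ramps} (Altschuler--Grayson), with no argument given. Your sketch is essentially the standard proof that appears in that reference---working with the curvature vector $\vec{k}=\partial_s T$ rather than the Frenet scalars to avoid the degeneracy at inflection points, deriving parabolic evolution equations for $\partial_s^m\vec{k}$, bootstrapping via the maximum principle and interpolation to bound all derivatives, extending smoothly to $t=\alpha$, and then restarting via short-time existence. So your approach is correct and aligned with the cited source; there is simply nothing in the present paper to compare it against.
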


Consequently, if the curvature becomes unbounded then there exists a maximal time of evolution $\w$. In this situation we say that the evolution of $\g$ by its curvature forms a singularity at that time. Singularity formation can be classified according to the following definitions.

\begin{Definition}
\label{definitionsingularities}
Given a space curve $\g(\p,t)$ that evolves by curve shortening flow which forma a singularity at $\w$, we say:
\begin{itemize}
\item The singularity formation is {\bfseries \itshape Type I} if $\,\lim_{t\to \w}(\sup k^2(\cdot,t))(\w-t)$ is bounded.
\item The singularity formation is {\bfseries \itshape Type II} if $\,\lim_{t\to \w}(\sup k^2(\cdot,t))(\w-t)$ is unbounded.
\end{itemize}
\end{Definition}

To study the formation of these singularities we define two types of sequences:
\begin{Definition}
\label{blowupsequence}
For $\g(\p,t)$ defined as in Definition \ref{definitionsingularities}, we say:
\begin{itemize}
\item $\{(p_n,t_n)\}\in S^1\!\times\![0,\w)$ is a {\bfseries \itshape blow-up sequence} if
\begin{equation*}
 \lim_{n\to \infty}t_n=\w \quad \mbox{ and }\quad \,\lim_{n \to \infty} k^2(p_n,t_n)=\infty;
\end{equation*}
\item $\{(p_n,t_n)\}\in S^1\!\times\![0,\w)$ is an {\bfseries \itshape essential blow-up sequence} if it is a blow-up sequence and there exists $\rho \in \R^{+}$, independent of $n$, such that
\begin{equation*}
 \rho \,(\,\sup\,k^2(\cdot,t))\leq k^2(p_n,t_n) \mbox{ when } t \leq t_n.
\end{equation*}
\end{itemize}
\end{Definition}

A standard way to analyze singularities is under rescalings in space and in time to obtain a limit curve with bounded curvature. A rescaled solution of a curve $\g$ that evolves by the curve shortening flow is defined as follows
\begin{Definition}
\label{definicionrescalamiento}
A rescaled solution $\g_n$ of $\g$ along a blow-up sequence $\{(p_n,t_n)\} \in S^1\!\times\![0,\w)$ is a curve \linebreak $\g_n: S^1\times[-\l^2_{n}t_n,\l^2_{n}(\w-t_n))\to \Rj{3}$ defined by:
\begin{eqnarray*}
\g_n(\cdot,\overline{t})=\l_n(O_n\g(\cdot,t)+B_n); \hspace{0.5 cm} \overline{t}=\l^2_n(t-t_n),
\end{eqnarray*}
where $\l_n \in \R^{+}, O_n \in SO(3), B_n \in \Rj{3}$ are chosen so that $\g_n$ is a solution to \eqref{curveshorteningflow} and:
\begin{itemize}
  \item $\g_n(p_n,0)=0 \in \Rj{3}$;
  \item the unit tangent vector $T_n(p_n,0)=(1,0,0)$;
  \item $k_n \cdot N_n(p_n,0)=(0,1,0)$.
\end{itemize}
\end{Definition}

\begin{Remark}
\label{parametrizationg}
{\rm Since the limit curve of the rescaled solution may not be closed, it is convenient to assume that each $\g_n$ is defined on the real line as a periodic map. Thus, we assume that $\g(\p,t):\lf-\f{\pi}{2},\f{\pi}{2}\right]\to \R^3.$ }
\end{Remark}

In \cite[Th.6.1]{Singularities} it was proved that if $\{(p_n,t_n)\}$ is an essential blow-up sequence, then the formation of singularities is a planar phenomenon in the following sense:

\begin{Theorem}
\label{fenomenoplanar}
If $\{(p_n,t_n)\}$ is an essential blow-up sequence then
\begin{eqnarray*}
\lim_{n\to\infty}\f{\tau}{k}(p_n,t_n)=0.
\end{eqnarray*}
\end{Theorem}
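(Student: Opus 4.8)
The plan is to argue by contradiction, combining the rescaling of Definition~\ref{definicionrescalamiento} with a compactness argument. The first point to record is that the ratio $\tau/k$ is invariant under this rescaling: the rigid motions $O_n,B_n$ preserve both curvature and torsion, while the dilation by $\l_n$ scales each of $k$ and $\tau$ by $\l_n^{-1}$, so $\tfrac{\tau_n}{k_n}(p_n,0)=\tfrac{\tau}{k}(p_n,t_n)$. Hence it suffices to prove $\tfrac{\tau_n}{k_n}(p_n,0)\to 0$, and for this I would suppose otherwise: after passing to a subsequence there is $c>0$ with $\tfrac{\tau}{k}(p_n,t_n)=\tfrac{\tau_n}{k_n}(p_n,0)\ge c$ for every $n$.

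Next I would extract an ancient limit. Since the normalization $k_nN_n(p_n,0)=(0,1,0)$ forces $\l_n=k(p_n,t_n)$, the essential blow-up condition $\rho\,\sup k^2(\cdot,t)\le k^2(p_n,t_n)$ for $t\le t_n$ rescales to the uniform curvature bound $\sup k_n^2(\cdot,\bar t)\le \rho^{-1}$ for all $\bar t\le 0$. Because $\l_n^2 t_n\to\infty$, the rescaled flows are defined on time intervals exhausting $(-\infty,0]$, and Theorem~\ref{existencialong} together with interior parabolic (Bernstein-type) estimates for $\partial_s^{\,j}k$ and $\partial_s^{\,j}\tau$ yields smooth subconvergence to an ancient solution $\g_\infty$ on $(-\infty,0]$ with $k_\infty(0,0)=1$. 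As $k_\infty(0,0)>0$, the Frenet frames converge near the marked point, so the standing assumption passes to the limit as $\tfrac{\tau_\infty}{k_\infty}(0,0)\ge c>0$; in particular $\g_\infty$ has nonvanishing torsion.

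The heart of the argument is then to show that $\g_\infty$ is planar, i.e.\ $\tau_\infty\equiv 0$, contradicting the previous line. From \eqref{frenet} and \eqref{curveshorteningflow} I would derive the evolution equation $\partial_t k=\partial_s^2 k+k^3-k\tau^2$ and the companion equation for $\tau$, and combine them into a parabolic equation for the scale-invariant quantity $w=\tau/k$, of the schematic form $\partial_t w=\partial_s^2 w+a\,\partial_s w+b\,w$, with coefficients $a,b$ controlled by the uniform bounds on $k_\infty$ and its derivatives. A Liouville-type application of the maximum principle on the ancient, curvature-bounded solution would then force $w\equiv 0$, whence $\tau_\infty\equiv 0$.

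I expect the main obstacle to lie in this last step, in two guises. First, torsion is defined only where $k>0$, so the smooth subconvergence of $\tau_n$ must be secured on the region where the $k_n$ are bounded below---that is, near the marked point---by bootstrapping from the curvature bound rather than assumed outright. Second, and more seriously, putting the torsion equation into a form whose reaction term $b$ has a workable sign (or can be absorbed), and justifying the maximum principle for a noncompact ancient solution, is the delicate point; this is exactly where the \emph{essential} character of the blow-up sequence---guaranteeing a nonflat limit with globally bounded rescaled curvature---must be used, and it is the step on which I would spend the most care.
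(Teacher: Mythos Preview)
The paper does not prove Theorem~\ref{fenomenoplanar}; it is quoted from \cite[Th.~6.1]{Singularities} as a preliminary result, so there is no ``paper's own proof'' to compare against. What one can compare is Altschuler's original argument with your outline.

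Your overall architecture---rescale along the essential sequence, extract an ancient limit with $k_\infty(0,0)=1$, and show that limit is planar---is exactly right. The point where your sketch and Altschuler's proof diverge is precisely the step you flag as the obstacle: you propose to derive a parabolic equation for $w=\tau/k$ and run a Liouville/maximum-principle argument on the ancient limit. That route is genuinely delicate (the reaction term does not have a clean sign, and the noncompactness of the limit makes the maximum principle nontrivial), and you would likely get stuck there.

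Altschuler avoids this entirely by using the monotonicity already recorded here as Theorem~\ref{totalcurvaturedecreasing}: since
\[
\frac{d}{dt}\int_{\g}|k|\,ds\;\le\;-\int_{\g}\tau^2|k|\,ds,
\]
integrating in time gives $\displaystyle\int_0^{\w}\!\!\int_{\g}\tau^2 k\,ds\,dt<\infty$. The key observation is that the space--time integrand $\tau^2 k\,ds\,dt$ is invariant under the parabolic rescaling of Definition~\ref{definicionrescalamiento}. Hence for any fixed $T>0$,
\[
\int_{-T}^{0}\!\!\int_{\g_n}\tau_n^2\,k_n\,ds_n\,d\bar t
\;=\;\int_{t_n-T\l_n^{-2}}^{\,t_n}\!\!\int_{\g}\tau^2 k\,ds\,dt\;\longrightarrow\;0
\]
as $n\to\infty$, because the right-hand side is the tail of a convergent integral near $t=\w$. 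Near the marked point one has $k_n\ge \tfrac12$ on a fixed space--time neighbourhood (from $k_n(p_n,0)=1$ and the uniform curvature bound), so $\tau_n\to 0$ in $L^2$ there; combined with parabolic interior estimates for the curvature vector $kN$ (which give uniform $C^1$ control of $\tau_n$ where $k_n$ is bounded below), this yields $\tau_n(p_n,0)\to 0$. Since $k_n(p_n,0)=1$, the conclusion $\tfrac{\tau}{k}(p_n,t_n)\to 0$ follows.

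So the missing idea in your proposal is not a Liouville theorem for $\tau/k$, but the use of the scale-invariant, time-integrable quantity $\iint\tau^2 k\,ds\,dt$ coming from total-curvature decay. A secondary gap is the compactness step: Bernstein-type bounds do not apply to $k$ and $\tau$ separately (bounded $k$ does not a priori bound $\tau$); one works instead with the curvature \emph{vector} $kN$, which satisfies a clean semilinear heat equation, and recovers $\tau$ from it on the set $\{k>0\}$.
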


Thus, analogously to the planar case (according to \cite{Singularities}) and following Definition \ref{definitionsingularities} we have

\begin{Proposition}
\label{TypeII}
If $\g(\p,t)$ evolves by curve shortening flow and it forms a singularity at time $\w$ then its limit of rescaled solutions satisfies:
\begin{itemize}
\item If $\g$ forms a Type I singularity, then $\g$ is asymptotic to a planar solution which is homothetically shrinking, i.e. it is a contracting self-similar solution. These planar solutions were studied and classified by Abresch-Langer \cite{AbreschLanger} (see Fig.\ref{ALcurves}).
\item If $\g$ forms a Type II singularity, then there exists an essential blow-up sequence $\{(p_n,t_n)\}$ such that a rescaling of $\g$ converges along a subsequence of $\{(p_n,t_n)\}$ to a convex eternal solution $\g_\infty$.
\item Convex eternal solutions are characterized by a translating graph that satisfies an ordinary differential equation and are known as the {\it Grim Reaper} (Definition \ref{DefinicionGrimReaper}).
\end{itemize}
\end{Proposition}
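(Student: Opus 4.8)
The plan is to treat the two singularity types separately: in each case I extract a rescaled limit in the sense of Definition~\ref{definicionrescalamiento}, use Theorem~\ref{fenomenoplanar} to show the limit is planar, and then import the known planar classification. Note that with the frame normalization $k_n\p N_n(p_n,0)=(0,1,0)$ the scaling factor is forced to be $\l_n=k(p_n,t_n)$, so the rescaled curvature equals $1$ at each base point; the whole difficulty is to understand the limiting flow as the existence interval $[-\l_n^2 t_n,\l_n^2(\w-t_n))$ opens up, and to control the torsion well enough that the codimension-two limit degenerates to a planar one.

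\emph{Type I.} I would choose a blow-up sequence realizing the maximum curvature, $k^2(p_n,t_n)=\sup k^2(\p,t_n)$; by the Type~I condition of Definition~\ref{definitionsingularities} this sequence is essential and $k^2(p_n,t_n)(\w-t_n)$ stays bounded. Rescaling by $\l_n=k(p_n,t_n)$ keeps $k_n$ uniformly bounded on compact time intervals, and standard interior estimates for the flow then bound all spatial derivatives of $k_n$ and $\tau_n$; a Langer-type compactness theorem yields a subsequence converging in $C^\infty_{\mathrm{loc}}$ to a smooth ancient flow $\g_\infty$ on $(-\infty,0]$. Theorem~\ref{fenomenoplanar} gives $\f{\tau}{k}(p_n,t_n)\to 0$, and together with the frame normalization this forces the torsion of $\g_\infty$ to vanish identically, so $\g_\infty$ is planar. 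Finally Huisken's monotonicity formula is nonincreasing along the flow and its density becomes constant in the Type~I rescaled limit; the rigidity case of the monotonicity forces $\g_\infty$ to be homothetically shrinking. Planar self-shrinkers are exactly the curves classified by Abresch--Langer \cite{AbreschLanger}, which proves the first bullet.

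\emph{Type II.} Now $k^2(p,t)(\w-t)$ is unbounded, so I would replace the naive rescaling by Hamilton's point-picking: fix $T_n\uparrow\w$ and select $(p_n,t_n)$ maximizing $k^2(p,t)(T_n-t)$ over $S^1\times[0,T_n]$. Maximality makes $\{(p_n,t_n)\}$ an essential blow-up sequence in the sense of Definition~\ref{blowupsequence}, keeps the rescaled curvature $k_n\le 1+o(1)$ on expanding parabolic regions, and — by the Type~II growth — makes both ends of $[-\l_n^2 t_n,\l_n^2(\w-t_n))$ diverge, so the limit flow $\g_\infty$ is \emph{eternal}; Theorem~\ref{fenomenoplanar} again makes it planar, with speed $k_\infty$ attaining its spacetime maximum $1$ at the base point. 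To identify $\g_\infty$ I would first show it is convex: the planar curvature satisfies $\d_t k=\d_s^2 k+k^3$, so $k$ is a nonnegative supersolution of the heat equation and, equalling $1$ at an interior point, cannot vanish by the strong maximum principle, whence $k_\infty>0$. Convexity licenses Hamilton's differential Harnack inequality; for an eternal solution whose speed attains an interior maximum the Harnack quantity vanishes identically, which is exactly the translating-soliton equation. Writing $\g_\infty$ as a graph $y=u(x)$ with $u''=1+(u')^2$ then yields $u(x)=-\log\cos x$ up to rigid motions and scaling — the Grim Reaper of Definition~\ref{DefinicionGrimReaper} — proving the second and third bullets.

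The main obstacle is the Type~II identification. It requires (i)~Hamilton's point-picking to produce an essential blow-up sequence whose rescaled existence interval exhausts all of $\R$; (ii)~enough a priori control on $\tau$ and its derivatives — supplied through Theorem~\ref{fenomenoplanar} together with the interior estimates — that the codimension-two limit is genuinely planar, so that the planar Harnack inequality is even applicable; and (iii)~the rigidity argument forcing the eternal, convex, planar limit to translate. The reduction to the plane in both cases is the technical heart: once planarity is secured, the entire planar theory (Abresch--Langer, Hamilton's Harnack, and Grim Reaper uniqueness, cf.~\cite{Singularities}) transfers directly.
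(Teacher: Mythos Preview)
The paper does not actually prove this proposition: it is stated as a known result, prefaced by ``analogously to the planar case (according to \cite{Singularities})'' and left without argument. So there is no ``paper's own proof'' to compare against; the statement is imported wholesale from Altschuler's work \cite{Singularities}. Your sketch is in fact a faithful outline of how that result is obtained there (and in the subsequent literature): essential blow-up sequences with Hamilton point-picking in the Type~II case, compactness, planarity of the limit, and then the planar classification via monotonicity (Type~I) or Harnack rigidity (Type~II).

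One point deserves tightening. You write that Theorem~\ref{fenomenoplanar} gives $\tau/k\to 0$ at the base points $(p_n,t_n)$ and that ``together with the frame normalization this forces the torsion of $\g_\infty$ to vanish identically.'' As stated in this paper, Theorem~\ref{fenomenoplanar} is only a pointwise statement along the sequence; by itself, together with the frame normalization, it yields $\tau_\infty=0$ only at the single spacetime base point, not globally. What is actually needed---and what Altschuler proves---is uniform control of $\tau/k$ on expanding parabolic regions around $(p_n,t_n)$, so that the limit has $\tau_\infty\equiv 0$ everywhere. You gesture at this in your final paragraph (``enough a priori control on $\tau$ and its derivatives''), but the body of the argument overstates what the cited theorem delivers. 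If you want a self-contained proof rather than a citation, that uniform torsion estimate is the step you must supply explicitly; everything else in your outline is standard and correct.
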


\begin{Definition} (\cite{HeatequationG})
\label{DefinicionGrimReaper}
The Grim Reaper is the planar curve defined by $$\g_\infty(x,t)=(x,-\ln(\cos(x))+t),\quad \mbox{for}\, x\in\lf-\frac{\pi}{2},\frac{\pi}{2}\rg,$$
thus it moves upwards with constant speed in time (see Fig.\ref{ginf}).
\end{Definition}

\begin{Remark}
{\rm The Grim Reaper is a convex curve with bounded curvature that evolves by the curve shortening flow following \eqref{CSFgeneral}.}
\end{Remark}

\begin{figure}[h]
\centering
\includegraphics[width=6cm,height=2.7cm]{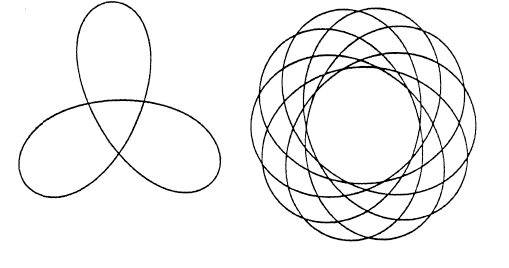} 
\caption{Abresch-Langer Curves}
\label{ALcurves}
\end{figure}

\begin{figure}[h]
\centering
\includegraphics[width=3.7cm,height=4.5cm]{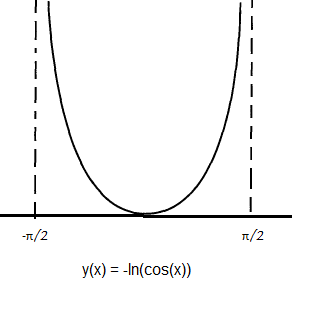} 
\caption{Grim Reaper}
\label{ginf}
\end{figure}

The total curvature is a relevant geometric quantity for embedded space curves that will be considered in this work to guarantee the existence and uniqueness of the minimal surface $X(\p,t)$ for every $t\in[0,\w)$ (Section \ref{surface}). In \cite[Th.5.1]{Singularities} it was shown that if a curve evolves by its curvature, then the total curvature has a monotone behavior in the evolution. More precisely,
\begin{Theorem}
\label{totalcurvaturedecreasing}
Let $\g$ be a solution to \eqref{curveshorteningflow}. Then we have
\begin{equation}
\f{\d}{\d t}\I{\g}{}|k|ds\leq -\I{\g}{}\tau^2|k|ds<0.
\end{equation}
\end{Theorem}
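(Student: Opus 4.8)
The plan is to differentiate the total curvature functional $\int_\g |k|\,ds$ directly, using the evolution equations induced by \eqref{curveshorteningflow}. First I would recall the standard commutation relation under curve shortening flow: if $\g$ moves with normal speed $k$, then the arc-length element evolves by $\dt(ds) = -k^2\,ds$, and the curvature satisfies the reaction–diffusion equation $\dt k = \dd_s k + k^3 - k\,\tau^2$ (this is the space-curve version of the planar $\dt k = k_{ss}+k^3$; the extra $-k\tau^2$ term is exactly what carries the torsion into the estimate). Because, by the Remark after \eqref{frenet}, $k$ is taken nonnegative, on the open set where $k>0$ we may write $|k|=k$ and differentiate freely; the points where $k=0$ must be handled with a little care, which I expect to be the main technical nuisance rather than a deep obstacle.

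Next I would compute
\begin{equation}
\f{\d}{\d t}\I{\g}{}k\,ds = \I{\g}{}\lf(\dd_s k + k^3 - k\tau^2\rg)ds + \I{\g}{}k\,(-k^2)\,ds
= \I{\g}{}\dd_s k\,ds - \I{\g}{}k\tau^2\,ds .
\end{equation}
On a closed curve the first integral $\int_\g \dd_s k\,ds$ vanishes by the fundamental theorem of calculus (the integrand is a total $s$-derivative and $\g$ is closed), so one is left with
\begin{equation}
\f{\d}{\d t}\I{\g}{}k\,ds = -\I{\g}{}\tau^2 k\,ds \leq 0,
\end{equation}
which is the claimed inequality $\f{\d}{\d t}\I{\g}{}|k|ds\leq -\I{\g}{}\tau^2|k|ds$. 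Strict negativity follows unless $\tau\equiv 0$ along $\g(\cdot,t)$, i.e. unless the curve is instantaneously planar.

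The one genuine subtlety is the lack of smoothness of $|k|$ at inflection points (where $k=0$), which can appear and disappear in the codimension-two setting as noted in the introduction. To handle this rigorously I would replace $|k|$ by $\sqrt{k^2+\e^2}$, carry out the same computation with this smooth approximation — the boundary term still integrates to zero, and the reaction terms produce $\tfrac{k^2}{\sqrt{k^2+\e^2}}(k^2-\tau^2)$ plus a manifestly nonpositive gradient contribution $-\e^2|\dd_s k|^2 (k^2+\e^2)^{-3/2}$ — and then let $\e\to 0$, using dominated convergence and the uniform bounds available on any compact time subinterval of $[0,\w)$. Since this argument is already carried out in \cite[Th.5.1]{Singularities}, I would cite it for the details and present only the formal computation above as the heart of the matter.
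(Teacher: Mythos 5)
The paper offers no proof of this statement: it is imported verbatim from Altschuler \cite[Th.~5.1]{Singularities}, so there is no internal argument to compare yours against. Your computation is precisely the standard one underlying that citation, and it is essentially correct: with $\frac{\partial}{\partial t}(ds)=-k^2\,ds$ and $\frac{\partial}{\partial t}k=k_{ss}+k^3-k\tau^2$ the cubic terms cancel and one is left with $\int_\g k_{ss}\,ds-\int_\g k\tau^2\,ds$. One refinement is worth making explicit, because it is exactly why the theorem asserts ``$\leq$'' rather than ``$=$'': the evolution equation for $k$ (and $\tau$ itself) is only defined on the open set $\{k>0\}$, and on each component $(a,b)$ of that set the nonnegativity of $k$ forces $k_s(a^+)\geq 0$ and $k_s(b^-)\leq 0$, so $\int_\g k_{ss}\,ds=\sum\bigl(k_s(b^-)-k_s(a^+)\bigr)\leq 0$ rather than vanishing outright; your appeal to the fundamental theorem of calculus gives equality only when $k>0$ everywhere. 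In the regularized version, note also that the reaction term must be combined with the arc-length term, $k^4(k^2+\varepsilon^2)^{-1/2}-k^2(k^2+\varepsilon^2)^{1/2}=-\varepsilon^2k^2(k^2+\varepsilon^2)^{-1/2}\leq 0$, to see the nonpositivity; as written, your $\frac{k^2}{\sqrt{k^2+\varepsilon^2}}(k^2-\tau^2)$ leaves that cancellation implicit. Neither point is a gap --- they are the details carried out in the cited reference --- but they are what converts your formal identity into the stated inequality.
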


Additionally, following \cite[Sec.5-7,Th.3]{Docarmo} we have a direct relation between the total curvature and geometric properties of a space curve.

\begin{Theorem}[Fenchel's Theorem]
\label{total curvature}
If $\g$ is an embedded curve then the total curvature $\int_{\g} |k| \geq 2\pi$ and equality holds if and only if the curve is a planar convex curve.
\end{Theorem}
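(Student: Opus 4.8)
The plan is to use the classical device of the \emph{tangent indicatrix}. Parametrize $\g$ by arc length $s\in[0,L]$, where $L$ is its length, and set $\tg(s)=T(s)$, the unit tangent transported to the origin; since $\g$ is a smooth closed curve, $\tg$ is a closed curve on the unit sphere $S^2$. By the Frenet equations \eqref{frenet}, $\tg'(s)=T'(s)=kN$, so $|\tg'(s)|=|k(s)|$ and the length of $\tg$ equals exactly the total curvature,
\begin{equation*}
\ell(\tg)=\I{0}{L}|\tg'(s)|\,ds=\I{0}{L}|k|\,ds=\I{\g}{}|k|\,ds.
\end{equation*}
Thus both the inequality and the equality case become statements about the length of a closed curve on $S^2$, and the whole problem is transferred to spherical geometry.

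First I would show that $\tg$ cannot be contained in any open hemisphere. If it lay in the open hemisphere with pole $v\in S^2$, then $\langle T(s),v\rangle>0$ for every $s$, whence
\begin{equation*}
0<\I{0}{L}\langle T(s),v\rangle\,ds=\Big\langle \I{0}{L}\g'(s)\,ds,\,v\Big\rangle=\langle \g(L)-\g(0),v\rangle=0,
\end{equation*}
contradicting closedness of $\g$. The central ingredient is then the Fenchel--Horn spherical lemma: a closed curve on $S^2$ not contained in any open hemisphere has length $\geq 2\pi$. I would prove it by bisection. Pick $A,B\in\tg$ splitting $\tg$ into two arcs of equal length $\ell(\tg)/2$; if $\ell(\tg)<2\pi$ each has length $<\pi$. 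Let $E$ be the midpoint of the shorter great-circle arc from $A$ to $B$. If a subarc met the great circle polar to $E$ at a point $Q$, then reflecting part of that subarc through the axis $OE$ (which swaps $A$ and $B$ and sends $Q$ to $-Q$) would produce a path of length $<\pi$ joining the antipodes $Q$ and $-Q$, impossible since $d_{S^2}(Q,-Q)=\pi$. Hence $\tg$ would lie in the open hemisphere with pole $E$, contradicting the previous paragraph; therefore $\ell(\tg)\geq 2\pi$, i.e. $\I{\g}{}|k|\geq 2\pi$.

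For the equality case, assume $\I{\g}{}|k|=2\pi$, so $\tg$ has length exactly $2\pi$ and meets every great circle. A rigidity sharpening of the bisection argument forces $\tg$ to be a great circle covered exactly once. Writing this great circle as the intersection of $S^2$ with the plane through the origin normal to $w$, we obtain $\langle T(s),w\rangle=0$ for all $s$, hence $\f{d}{ds}\langle \g(s),w\rangle=0$ and $\g$ lies in an affine plane orthogonal to $w$: the curve is planar. Since $\tg$ traverses the great circle once and monotonically, the tangent of the planar curve $\g$ turns monotonically through total angle $2\pi$; combined with embeddedness this makes $\g$ a simple closed planar curve whose signed curvature never vanishes, i.e. a convex curve. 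The converse is immediate, since a convex embedded planar curve has tangent turning exactly $2\pi$ and hence total curvature $2\pi$, completing the characterization.

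The main obstacle is the spherical lemma, and above all its equality case. The soft part — reducing everything to the indicatrix via $\ell(\tg)=\I{\g}{}|k|$, and deducing planarity and then convexity once the indicatrix is known to be a single great circle — is routine. The real work lies in the sharp bisection/reflection estimate showing that length below $2\pi$ forces containment in an open hemisphere, and in the accompanying rigidity statement that length exactly $2\pi$ together with the no-hemisphere property pins the indicatrix down to one great circle; these require the careful argument sketched above rather than any soft estimate.
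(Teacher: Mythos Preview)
The paper does not supply its own proof of this theorem: it is quoted as a classical result with the citation \cite[Sec.~5--7, Th.~3]{Docarmo}, and no argument is given. Your proposal is precisely the classical proof via the tangent indicatrix and the Fenchel--Horn hemisphere lemma, which is essentially the proof one finds in the cited reference; so there is nothing to compare, and your sketch is an appropriate substitute for what the paper leaves to Do~Carmo.

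One small remark on the write-up: in the bisection step you should note that $A\neq B$ (since each half-arc has positive length) and $A\neq -B$ (since $d_{S^2}(A,B)\leq \ell(\tg)/2<\pi$), so that the midpoint $E$ of the shorter great-circle arc is well defined. In the equality case, once you know $\g$ is planar and embedded with $\int_\g|k|=2\pi$, the cleanest way to finish is to invoke the Umlaufsatz: the signed total curvature of a simple closed planar curve is $\pm 2\pi$, and equality of the absolute and signed totals forces the signed curvature not to change sign, hence convexity. This replaces the slightly informal ``rigidity sharpening'' you allude to.
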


In this work, we will consider a sort of embedded curves: unknotted curves.

\begin{Definition} \cite{Unknotted} An embedded curve $\g: S^1 \to \Rj{3}$ is {\bfseries \itshape unknotted} if there is an orientation-preserving homeomorphism of $\Rj{3}$ onto itself which maps $\g$ onto a planar circle in $\Rj{3}$, i.e. onto $S^1;$ otherwise, $\g$ is {\bfseries \itshape knotted} or is a knot.
\end{Definition}

\begin{figure}[h]\label{knot}
\centering
\includegraphics[width=6cm,height=3.5cm]{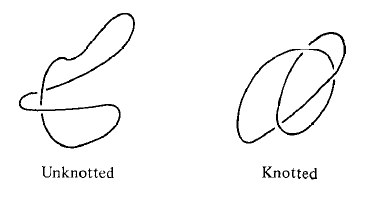} 
\caption{}
\end{figure}

Moreover, from \cite{Totalcurvatureknot} we have the following
\begin{Lemma}
\label{unknottedesmenorque4pi}
If the curve $\g$ is knotted then its total curvature is greater or equal than $4\pi$. Equivalently, if the total curvature of $\g$ is less than $4\pi$ then $\g$ is an unknotted curve.
\end{Lemma}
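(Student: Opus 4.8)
This is the F\'ary--Milnor theorem; the plan is to prove the contrapositive: if $\I{\g}{}|k|\,ds<4\pi$ then $\g$ bounds an embedded disk in $\Rj{3}$ and is therefore unknotted. I will follow Milnor's argument, whose engine is an integral-geometric (Crofton-type) identity on the unit sphere relating total curvature to the number of local maxima of linear height functions.

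First I would pass to the tangent indicatrix. Parametrizing $\g$ by arc length, the Gauss map $T\colon S^1\to S^2$ is a rectifiable closed curve on $S^2$ of length equal to the total curvature $\I{\g}{}|k|\,ds$. For $v\in S^2$ consider the linear height function $h_v(s)=\langle\g(s),v\rangle$; its critical points are exactly the $s$ with $T(s)\in v^{\perp}$, the great circle polar to $v$. For all but a measure-zero set of $v$ the function $h_v$ is Morse, and then --- since critical points of a Morse function on $S^1$ alternate between local maxima and local minima --- the number of $s$ with $T(s)\in v^{\perp}$ equals $2\mu(v)$, where $\mu(v)\ge1$ is the number of local maxima of $h_v$. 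Crofton's formula on $S^2$,
\[
\int_{S^2}\#\{\,s\in S^1 : C(s)\in v^{\perp}\,\}\;dA(v)=4\,\mathrm{length}(C),
\]
valid for any rectifiable closed curve $C\colon S^1\to S^2$, applied to $C=T$ yields $\int_{S^2}\mu(v)\,dA(v)=2\I{\g}{}|k|\,ds$. Since $\mu\ge1$ almost everywhere and $\mathrm{Area}(S^2)=4\pi$, this already re-proves Fenchel's inequality $\I{\g}{}|k|\,ds\ge2\pi$.

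Now suppose $\I{\g}{}|k|\,ds<4\pi$. Then $\int_{S^2}\mu\,dA<8\pi=2\,\mathrm{Area}(S^2)$, so $\mu\ge2$ cannot hold almost everywhere; combined with $\mu\ge1$ a.e., there is a positive-measure set of directions $v$ with $\mu(v)=1$, among which we fix one for which $h_v$ is also Morse. For this $v$, $h_v$ has a single maximum and a single minimum on $\g$, so deleting those two points splits $\g$ into two open arcs, each carried homeomorphically and monotonically by $h_v$ onto the open interval $(m,M)$, with $m=\min_{\g}h_v$ and $M=\max_{\g}h_v$. I would then build a spanning disk: for $t\in[m,M]$ let $p(t),q(t)$ be the one or two points of $\g$ in the plane $P_t=h_v^{-1}(t)$; these depend continuously on $t$, coincide when $t\in\{m,M\}$, and are distinct for $t\in(m,M)$, so $D=\bigcup_{t\in[m,M]}\overline{p(t)q(t)}$ is the continuous injective image of a closed $2$-disk --- chords lying in distinct parallel planes $P_t$ are disjoint, interior chords are nondegenerate segments, and $\d D=\g$, traced out by the two arcs together with the two extreme points. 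Hence $\g$ bounds an embedded disk in $\Rj{3}$, and a curve bounding an embedded disk is ambient isotopic to a round circle (flatten $D$ and extend the flattening to a homeomorphism of $\Rj{3}$), so $\g$ is unknotted, which is the contrapositive of the assertion.

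I expect the real content, and the point demanding the most care, to be the integral-geometric identity: that the tangent indicatrix is rectifiable with length exactly $\I{\g}{}|k|\,ds$, that Crofton's formula on $S^2$ applies in the needed generality, and that the directions $v$ for which $h_v$ fails to be Morse form a null set that can be discarded (this is transparent for polygonal or generic curves, as in Milnor's original treatment, and the general smooth case reduces to it since total curvature and knot type are stable under small perturbations). The topological endgame --- that the chord construction really yields an embedded disk, and that an embedded spanning disk forces the unknot --- is conceptually routine but should be written carefully; alternatively one can produce the ambient isotopy to the round circle directly from the monotone structure of $D$. All of this is classical and is precisely the content of \cite{Totalcurvatureknot}, cited in the statement.
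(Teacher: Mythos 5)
The paper gives no proof of this lemma; it is quoted directly from the cited reference \cite{Totalcurvatureknot}, and your outline is precisely Milnor's integral-geometric argument (Crofton on the tangent indicatrix, $\int_{S^2}\mu\,dA=2\int_\gamma|k|\,ds$, hence $\mu=1$ on a set of positive measure when the total curvature is below $4\pi$, hence a monotone two-arc decomposition and a spanning chord disk). The sketch is correct in its essentials, with the remaining work being exactly the classical details you flag, so it matches the source the paper relies on.
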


Thus, using Theorem \ref{totalcurvaturedecreasing}, it is easy to prove
\begin{Proposition}
\label{siempreesunknotted}
Suppose that $\g(\p,t)$ satisfies the equation (\ref{curveshorteningflow}) and the total curvature of $\g(\p,0)$ is less than $4\pi$. If $\g(\p,t)$ remains embedded for all $t\in[0,\w)$ then $\g(\p,t)$ will be unknotted for every $t\in[0,\w)$.
\end{Proposition}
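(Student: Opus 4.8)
The plan is to derive the conclusion directly from the monotonicity of the total curvature together with Lemma~\ref{unknottedesmenorque4pi}, so the argument is essentially formal. First I would record that, since $\g$ is a solution to \eqref{curveshorteningflow}, Theorem~\ref{totalcurvaturedecreasing} implies that the function $t\mapsto\int_{\g(\p,t)}|k|\,ds$ is non-increasing (indeed strictly decreasing) on $[0,\w)$. Consequently, for every $t\in[0,\w)$,
\[
\int_{\g(\p,t)}|k|\,ds\ \le\ \int_{\g(\p,0)}|k|\,ds\ <\ 4\pi,
\]
where the last inequality is the standing hypothesis on the initial curve.

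Next, since by assumption $\g(\p,t)$ is embedded for every $t\in[0,\w)$, the notion of (un)knottedness applies to each member of the family, and Lemma~\ref{unknottedesmenorque4pi} states that an embedded curve whose total curvature is less than $4\pi$ is unknotted. Applying this to $\g(\p,t)$ for each fixed $t$ gives that $\g(\p,t)$ is unknotted for all $t\in[0,\w)$, which is the claim.

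I do not expect a real obstacle here. The only points deserving a word of care are that the strict bound $<4\pi$ at $t=0$ is propagated to all later times — which is immediate from Theorem~\ref{totalcurvaturedecreasing}, and for which even the non-strict monotonicity would already be enough — and that ``unknotted'' is only meaningful for embedded curves, which is precisely the hypothesis being assumed throughout.
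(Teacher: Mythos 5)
Your argument is correct and is exactly the one the paper intends: it invokes Theorem~\ref{totalcurvaturedecreasing} to propagate the bound $\int_{\g(\p,t)}|k|\,ds<4\pi$ from $t=0$ to all later times, and then applies Lemma~\ref{unknottedesmenorque4pi} at each fixed time, using embeddedness so that knottedness is well defined. The paper leaves this proof to the reader with precisely this hint, so there is nothing to add.
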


\begin{Remark}
{\rm In this work, we will study space curves $\g(\p,t)$ that satisfies the equation (\ref{curveshorteningflow}) and remains embedded for all $t\in[0,\w)$. Moreover, we will assume that $\g(\p,0)$ has total curvature less than $4\pi$.}
\end{Remark}

\section{Evolution of the minimal surface enclosed by $\g(\p,t)$}
\label{surface}
This section is devoted to study the minimal surface $X(\cdot,t)$ enclosed by $\g(\cdot,t)$. We start by recalling that Plateau's problem (originally proved in \cite{Existenceminimal}) asserts that for each embedded curve in $\mathbb{R}^3$ is possible to find an enclosed minimal surface. Moreover, results in \cite{Uniquenessminimalsurface} imply that if the boundary is unknotted then the enclosed minimal surface
is unique, disc-type and area minimizing.

\begin{Remark}
\label{Atwelldefined}
{\rm Note that if we assume that $\g_0$ has total curvature less than $4\pi$ and $\g(\p,t)$ remains embedded for every $t\in[0,\w)$, Proposition \ref{siempreesunknotted} implies that $\g(\p,t)$ will be unknotted for every $t\in[0,\w)$. Therefore, from \cite{Uniquenessminimalsurface} we will have that there exists a unique associated minimal surface $X(\p,t)$.}
\end{Remark}

In addition, since we will use convergence of a rescaled solution to a limit planar curve, we will need results on convergence of sequences of minimal surfaces.

\begin{Theorem}(\cite[Th.25]{Convergence})
\label{convergence}
Suppose that $\{M_n\}_{n\in\N}\subset \R^3$ is a sequence of orientable, embedded, simply connected minimal surfaces such that $\I{M_n}{}|K_n|dA_n<\lambda<\infty$.
Then (after passing to a subsequence) $\{M_n\}_{n\in\N}$ converges (at least in $C^{1,\a}$) in $\R^3$ to a limit minimal surface $M$.
\end{Theorem}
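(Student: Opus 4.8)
The plan is to prove the compactness/convergence statement (Theorem~\ref{convergence}) via the standard geometric measure theory approach to minimal surfaces with a uniform bound on total curvature, following the circle of ideas originating with Schoen, Osserman, and White. The key observation is that the hypothesis $\int_{M_n}|K_n|\,dA_n < \lambda < \infty$ is precisely the scale-invariant quantity that controls curvature concentration. First I would establish a \emph{uniform local area bound} and a \emph{uniform curvature bound} for the sequence $\{M_n\}$ on compact sets of $\R^3$. The total curvature bound, combined with the Gauss--Bonnet theorem and the minimality condition (which gives $K = -\tfrac{1}{2}|A|^2 \le 0$, so that $|K| = \tfrac{1}{2}|A|^2$), shows that $\int_{M_n}|A_n|^2\,dA_n$ is uniformly bounded; this is the $L^2$ norm of the second fundamental form, the natural energy in this setting.

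The main step is to upgrade this integral bound to a \emph{pointwise} curvature bound. Here I would invoke a small-energy regularity theorem (a Choi--Schoen or Schoen-type $\varepsilon$-regularity estimate): there exists $\varepsilon_0 > 0$ such that whenever $\int_{M_n \cap B_r(x)}|A_n|^2\,dA_n < \varepsilon_0$, one has a pointwise bound on $|A_n|$ at the center in terms of $r$. Since the total energy $\int_{M_n}|A_n|^2$ is bounded by $2\lambda$, for any fixed compact set $\Omega \subset \R^3$ only finitely many points can carry energy exceeding $\varepsilon_0$ in a small ball, so the number of possible ``bad'' concentration points is bounded independent of $n$. Away from these finitely many points, the $\varepsilon$-regularity estimate gives uniform $C^{1,\alpha}$ (indeed $C^\infty$) bounds on the $M_n$ as graphs. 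Passing to a subsequence, one obtains $C^{1,\alpha}$ convergence on compact subsets away from the bad set.

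The final step is to argue that the limit $M$ is a genuine minimal surface and that no loss of topology or area occurs at the bad points. Because each $M_n$ is minimal, embedded, orientable and simply connected, the $C^{1,\alpha}$ (and hence, by elliptic regularity for the minimal surface equation, smooth) convergence on the good region produces a smooth minimal limit there; a removable-singularity argument, using that an isolated point of finite total curvature cannot support a genuine branch or topological singularity for an embedded minimal surface, extends $M$ smoothly across the finitely many concentration points. I would then conclude that $M$ is a smooth minimal surface and that the convergence is $C^{1,\alpha}$ globally on $\R^3$ as claimed. The hard part will be the $\varepsilon$-regularity estimate and the removable-singularity analysis at the concentration points: controlling the possible bubbling of area and ensuring that the embeddedness and simple-connectedness hypotheses genuinely prevent any non-removable singularity from forming in the limit. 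Since this theorem is cited verbatim from \cite{Convergence}, in the paper itself I would simply reference that source; the above is the structure of the underlying argument.
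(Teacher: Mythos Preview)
The paper does not prove this theorem at all: it is stated as a black-box citation from White's lecture notes \cite[Th.~25]{Convergence}, with no argument given. You correctly recognize this in your final sentence, so in that sense your proposal matches the paper exactly --- the paper simply references the source.

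Your sketch of the underlying argument is broadly the right circle of ideas (total curvature bound $\Rightarrow$ uniform $L^2$ bound on $|A|$ $\Rightarrow$ $\varepsilon$-regularity $\Rightarrow$ pointwise curvature estimates away from finitely many concentration points $\Rightarrow$ subsequential smooth convergence plus removable-singularity analysis). One point to be careful about if you were to flesh this out: you assert that you would first establish a \emph{uniform local area bound}, but this does not follow automatically from a total curvature bound alone for minimal surfaces in $\R^3$; in White's setting the simple-connectedness and embeddedness are doing real work here (ruling out, for example, stacks of parallel planes), and the monotonicity formula together with the topological hypotheses is what ultimately supplies the area control. Since the paper treats the result as a citation, this subtlety is not something you need to resolve here, but it is the one genuinely nontrivial input your outline glosses over.
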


Under the hypotheses on $\g(\p,t)$ in this paper, we have

\begin{Proposition}
\label{boundedK}
Let $X(\p,t)$ be a minimal surface where its boundary $\g(\p,t)$ has total curvature less than $4\pi$, then $\I{X_t}{}|K(\p,t)|dA<2\pi$ for all $t\in[0,\w)$, where $K(\p,t)$ is the Gaussian curvature of the surface $X(\p,t)$.
\end{Proposition}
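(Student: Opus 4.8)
The plan is to read off the bound from the Gauss--Bonnet theorem, together with the pointwise inequality between the geodesic curvature of $\g(\p,t)$ as a curve on $X(\p,t)$ and its curvature $k$ as a space curve.

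First I would record that a minimal surface has non-positive Gaussian curvature: vanishing mean curvature forces the principal curvatures to be $\kappa$ and $-\kappa$, so $K=-\kappa^{2}\le 0$ everywhere on $X(\p,t)$. Hence $|K(\p,t)|=-K(\p,t)$ and
\begin{equation*}
\I{X_t}{}|K(\p,t)|\,dA=-\I{X_t}{}K(\p,t)\,dA.
\end{equation*}
Next, since $\g(\p,0)$ has total curvature less than $4\pi$ and $\g(\p,t)$ remains embedded, Proposition \ref{siempreesunknotted} gives that $\g(\p,t)$ is unknotted, so by \cite{Uniquenessminimalsurface} (as in Remark \ref{Atwelldefined}) the surface $X(\p,t)$ is of disc type with boundary $\g(\p,t)$; in particular its Euler characteristic equals $1$. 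Applying Gauss--Bonnet,
\begin{equation*}
\I{X_t}{}K(\p,t)\,dA+\I{\g(\p,t)}{}k_g\,ds=2\pi ,
\end{equation*}
where $k_g$ denotes the geodesic curvature of $\g(\p,t)$ regarded as a curve in $X(\p,t)$. Combining the two displays,
\begin{equation*}
\I{X_t}{}|K(\p,t)|\,dA=\I{\g(\p,t)}{}k_g\,ds-2\pi .
\end{equation*}

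To finish I would estimate the boundary term. Decomposing the curvature vector of $\g(\p,t)$ into its component tangent to $X(\p,t)$ and its normal component gives $k_g^{2}+k_n^{2}=k^{2}$, so $k_g\le|k_g|\le k$ pointwise along $\g(\p,t)$; integrating and using the hypothesis on the total curvature,
\begin{equation*}
\I{\g(\p,t)}{}k_g\,ds\le\I{\g(\p,t)}{}k\,ds<4\pi .
\end{equation*}
Substituting this into the previous identity yields $\I{X_t}{}|K(\p,t)|\,dA<2\pi$ for every $t\in[0,\w)$, which is the claim.

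The only point that requires care is the validity of Gauss--Bonnet in exactly this form, namely that $X(\p,t)$ is a smooth (immersed) disc up to and including the boundary $\g(\p,t)$, with no interior or boundary branch points, so that $\chi=1$ and the boundary term is the usual geodesic-curvature integral. This is precisely what the regularity and uniqueness theory for area-minimizing disc-type solutions of Plateau's problem spanning a smooth unknotted curve provides, so I would simply cite \cite{Uniquenessminimalsurface}; the remaining content is the short computation above.
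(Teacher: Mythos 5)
Your argument is correct and follows essentially the same route as the paper: non-positivity of $K$ for a minimal surface, Gauss--Bonnet on the disc with $\chi=1$, the pointwise bound $|k_g|\le k$ from $k^2=k_g^2+k_N^2$, and the total-curvature hypothesis. Your closing caveat about regularity up to the boundary is handled in the paper by an explicit standing assumption, so nothing further is needed.
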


\begin{proof}
Since $X(\p,t)$ is a minimal surface, its Gaussian curvature $K(\p,t)$ will be non-positive. Using the Gauss-Bonnet Theorem ($\chi=1$) we get
\begin{equation*}
\I{X(\p,t)}{}|K(\p,t)|dA=-\I{X(\p,t)}{}K(\p,t) dA=\I{\g(\p,t)}{}k_g(\p,t) ds-2\pi\leq\I{\g(\p,t)}{}|k_g(\p,t)|ds-2\pi,
\end{equation*}
where $k_g(\p,t)$ is the geodesic curvature of $\g(\p,t).$

On the other hand, if $k(\p,t)$ is the curvature of $\g(\p,t)$ then $$k^2(\p,t)=k_N^2(\p,t)+k_g^2(\p,t),$$ where $k_N(\p,t)$ is the normal curvature and $k_g(\p,t)$ is the geodesic curvature. Therefore, $|k_g(\p,t)|\leq|k(\p,t)|.$

Since $\g(\p,t)$ has total curvature less than $4\pi$, then
$$\I{X_t}{}|K(\p,t)|dA\leq\I{\g(\p,t)}{}|k_g(\p,t)|ds-2\pi<2\pi.$$
\end{proof}

\begin{Remark}
{\rm It is important to remark that in this paper we will assume that the minimal surface $X(\p,t)$ is continuous and regular up to the boundary and in time.}
\end{Remark}

\begin{Theorem}
\label{convergeofsurface}
Let $\{X_n\}_{n\in\N}$ be a sequence of the associated minimal surfaces of the rescaled solution $\g_n(\p,0)$ defined previously. If the Gaussian curvature of $X(\p,t)$ is uniformly bounded for every $t\in[0,\w)$, then $\{X_n\}_{n\in\N}$ converges smoothly to a planar surface.
\end{Theorem}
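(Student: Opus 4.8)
The plan is to combine the abstract convergence theorem for minimal surfaces (Theorem \ref{convergence}) with the uniform bound on Gaussian curvature and the scaling behaviour of the flow. First I would recall that the rescaled curves $\g_n(\p,0)$ are obtained from $\g(\p,t_n)$ by applying a rigid motion and dilating by $\l_n$, where $\l_n\to\infty$ because $\{(p_n,t_n)\}$ is a blow-up sequence; correspondingly, the associated minimal surface $X_n$ is the image of $X(\p,t_n)$ under the same rigid motion and dilation, since rigid motions and homotheties of $\R^3$ carry minimal surfaces to minimal surfaces and uniqueness (Remark \ref{Atwelldefined}) forces $X_n$ to be exactly this rescaled copy. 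Each $X_n$ is embedded, simply connected and orientable (disc-type, by \cite{Uniquenessminimalsurface}), so Theorem \ref{convergence} applies once we verify the uniform total-curvature bound $\int_{X_n}|K_n|\,dA_n<\lambda$.

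The key computation is that $\int_{X_n}|K_n|\,dA_n$ is scale-invariant: under a dilation by $\l_n$, Gaussian curvature scales like $\l_n^{-2}$ while area scales like $\l_n^{2}$, so $\int_{X_n}|K_n|\,dA_n=\int_{X(\p,t_n)}|K(\p,t_n)|\,dA$. By Proposition \ref{boundedK} this is $<2\pi$ for every $n$, so we may take $\lambda=2\pi$ and conclude that a subsequence of $\{X_n\}$ converges in $C^{1,\a}$ to a limit minimal surface $M$. I would then upgrade this to smooth convergence and identify $M$ as planar using the \emph{pointwise} uniform bound on $K(\p,t)$: if $|K(\p,t)|\le C$ on all of $X(\p,t)$ for all $t$, then after rescaling $|K_n|\le C/\l_n^2\to 0$, so the limit surface $M$ has $K\equiv 0$; being a complete (or boundary-free in the limit, since the boundary curve $\g_n$ converges to the Grim Reaper and the surface spreads out) minimal surface with vanishing Gaussian curvature, $M$ is a plane. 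Standard elliptic regularity (Schauder estimates applied to the minimal surface equation, using that the $C^{1,\a}$ limit has a graphical representation with small gradient) then promotes $C^{1,\a}$ convergence to $C^\infty_{loc}$ convergence.

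The main obstacle I anticipate is making rigorous the passage from the pointwise curvature bound to the conclusion that the limit is a genuine plane rather than, say, a sequence drifting off to infinity with no limit, or a limit surface that is flat only locally. Concretely, one must control where the convergence takes place: because $\g_n(p_n,0)=0$ with prescribed tangent and normal directions, the surfaces $X_n$ are anchored near the origin, so the limit $M$ contains the origin and, by the flatness argument, is the $(x,y)$-plane (the plane determined by the limiting tangent $(1,0,0)$ and $k_nN_n\to(0,1,0)$). Handling the behaviour near the boundary — recall the surfaces are only assumed regular \emph{up to} the boundary — requires knowing that the boundary curves $\g_n$ converge (to the Grim Reaper, by Proposition \ref{TypeII}), which I would invoke from the preceding sections; with that in hand, boundary regularity for minimal surfaces with $C^{1,\a}$-converging boundary data gives convergence up to the boundary. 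A secondary technical point is justifying that $X_n$ really is the rescaled $X(\p,t_n)$ and not some other minimal surface spanning $\g_n$; this is exactly where unknottedness and the uniqueness statement from \cite{Uniquenessminimalsurface}, already recorded in Remark \ref{Atwelldefined}, do the work, since Proposition \ref{siempreesunknotted} keeps every $\g(\p,t)$ unknotted.
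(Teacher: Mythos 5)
Your argument is correct and follows essentially the same route as the paper: scale-invariance of $\int|K_n|\,dA_n$ combined with Proposition \ref{boundedK} gives the uniform bound $<2\pi$ needed for Theorem \ref{convergence}, and the pointwise bound $|K_n|\le C/\l_n^2\to 0$ forces the limit to be flat. The extra details you supply (identifying $X_n$ as the rescaled $X(\p,t_n)$ via uniqueness, upgrading $C^{1,\a}$ to smooth convergence by elliptic regularity, and the boundary behaviour) are points the paper handles implicitly or defers to Remark \ref{convergenceuptheboundary}, so they strengthen rather than alter the argument.
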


\begin{proof}
Firstly, if $K_n$ denotes the Gaussian curvature of $X_n(\p,0)$, it is easy to compute that
\begin{equation}
\label{gaussiancurvature}
|K_n(\p,0)|=\l_n^{-2}|K(\p,t_n)|.
\end{equation}
Then, following Proposition \ref{boundedK} we get
\begin{equation}
\label{boundedtgc}
\I{X_n(\p,0)}{}|K_n(\p,0)|dA_n=\I{X(\p,t_n)}{}\l_n^{-2}|K(\p,t_n)|\l_n^2\p dA_{t_n}=\I{X(\p,t_n)}{}\left|K\lf\p,t_n\rg\right|dA_{t_n}<2\pi.
\end{equation}
Thus, $\{X_n\}_{n\in\N}$ is a sequence of minimal surfaces with total Gaussian curvature less than $2\pi$. Following \cite{Uniquenessminimalsurface} we obtain that the sequence $\{X_n\}_{n\in\N}$ are minimal surfaces of disk-type, i.e. embedded and simply connected. Therefore, using Theorem \ref{convergence} we get that the sequence $\{X_n\}_{n\in\N}$ converges smoothly to a minimal surface.

On the other hand, from equation (\ref{gaussiancurvature}) we can conclude that $K_n(\p,0)\to 0$ a.e. when $n\to\infty$. Since,
$|A_n(\p,0)|^2=-2K_n(\p,0)$, we obtain that the limit is a planar minimal surface.
\end{proof}

\begin{Remark}
\label{convergenceuptheboundary}
{\rm In this work we will assume that $X(\p,t)$ is at least $C^{1,\a}$ in time. Therefore, we may suppose that the convergence in Theorem \ref{convergeofsurface} is $C^{1,\a}$ up to the boundary and by regularity theory, we may obtain that this convergence is smooth up to the boundary.}
\end{Remark}

In this paper we will denote by $\f{d}{dt}X(u,v,t)$ the total derivate of $X(u,v,t)$ with respect $t$, by $X_u$ and $X_v$ the partial derivatives respect to $u$ and $v$, respectively.

The next lemma shows that if we assume that the minimal surface $X(\p,t)$ is regular in time then its evolution is given by
\begin{equation}
\label{evolX}
\f{d}{dt}X(\p,t)=\mathcal{V}(\p,t)\vec{n}(\p,t),
\end{equation}
where $\vec{n}(\p,t)$ is the unit normal vector to the surface $X(\p,t)$ and $\cv(\p,t)$ is a real smooth function.

\begin{Lemma}
\label{evolutionX}
Let $X:B\times[0,\w)\to\R^3$ be a parametrization of the minimal surface enclosed by $\g(\p,t)$ regular in time. If the evolution of $X$ is given by
$$\f{d}{dt}X(u,v,t)=\mathcal{V}(u,v,t)\vec{n}(u,v,t)+X_T(u,v,t),$$
where $\mathcal{V}$, $\vec{n}$ and $X_T$ denote a real function, the inward unit normal vector and a vector in the tangent plane to the surface $X(\p,t)$, respectively. Then, there exists a reparametrization $\tilde{X}$ of $X$ such that
\begin{equation*}
\f{d}{dt}\tilde{X}(u,v,t)=\tilde{\mathcal{V}}(u,v,t)\tilde{\vec{n}}(u,v,t).
\end{equation*}
\end{Lemma}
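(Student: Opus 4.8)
The plan is to show that the tangential component $X_T$ of the velocity field can be "gauged away" by solving an ODE that defines a time-dependent reparametrization of the domain $B$. This is the standard trick used to pass between \eqref{curveshorteningflow} and \eqref{CSFgeneral}: a tangential velocity only reshuffles points along the surface without changing the surface as a geometric object, so it should be absorbable into the choice of parametrization.

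**First I would** set up the reparametrization precisely. Seek a family of diffeomorphisms $\varphi(\cdot,t):B\to B$ with $\varphi(\cdot,0)=\mathrm{id}$, and define $\tilde X(u,v,t)=X(\varphi(u,v,t),t)$. Differentiating in $t$ gives
\begin{equation*}
\f{d}{dt}\tilde X(u,v,t)=\left(\f{d}{dt}X\right)(\varphi(u,v,t),t)+DX|_{\varphi}\cdot\f{\d\varphi}{\d t}(u,v,t),
\end{equation*}
where $DX$ denotes the differential of $X$ in the surface variables, whose image is exactly the tangent plane to the surface. Using the hypothesis $\f{d}{dt}X=\mathcal{V}\vec n+X_T$ and the fact that $X_T$ lies in that same tangent plane, I can write $X_T(\varphi(u,v,t),t)=DX|_{\varphi}\cdot W(u,v,t)$ for a unique vector field $W$ on $B$ (since $DX$ is an isomorphism onto the tangent plane for an immersed surface). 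Then choosing $\varphi$ to solve the ODE $\f{\d\varphi}{\d t}(u,v,t)=-W(u,v,t)$, $\varphi(u,v,0)=(u,v)$, makes the two tangential terms cancel, leaving
\begin{equation*}
\f{d}{dt}\tilde X(u,v,t)=\mathcal{V}(\varphi(u,v,t),t)\,\vec n(\varphi(u,v,t),t)=:\tilde{\mathcal{V}}(u,v,t)\,\tilde{\vec n}(u,v,t),
\end{equation*}
since the unit normal is a geometric quantity, so $\vec n(\varphi(u,v,t),t)$ is precisely the unit normal $\tilde{\vec n}$ of the reparametrized surface $\tilde X$ at $(u,v,t)$.

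**The main obstacle** is ensuring that the flow of the ODE $\f{\d\varphi}{\d t}=-W$ exists for all $t\in[0,\w)$ and keeps $\varphi(\cdot,t)$ a diffeomorphism of $B$ onto itself — in particular, integral curves must not escape $B$ through the boundary. This is handled by observing that on $\d B$ the original velocity $\f{d}{dt}X$ must be tangent to the boundary curve family (the boundary of $\tilde X$ is still a reparametrization of $\g(\p,t)$), so the normal-to-$\d B$ component of $W$ vanishes there; hence $W$ is tangent to $\d B$ along $\d B$, and the flow preserves $B$. Since $X$, $\vec n$, $X_T$ are assumed smooth on $B\times[0,\w)$ (regularity in time up to the boundary, as assumed in the paper), $W$ is smooth, and standard ODE theory gives a unique smooth flow $\varphi$ on each compact time subinterval; because $\varphi$ stays in the compact set $\bar B$ there is no finite-time blow-up, so the flow extends to all of $[0,\w)$. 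Finally, $\varphi(\cdot,t)$ is a diffeomorphism because it is the flow of a smooth (time-dependent) vector field, with smooth inverse given by the reverse flow. This completes the construction of $\tilde X$ with the desired purely-normal evolution.
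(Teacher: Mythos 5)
Your proof takes essentially the same route as the paper: define a time-dependent family of diffeomorphisms $\varphi(\cdot,t)$ of $B$ by an ODE chosen to cancel the tangential component, set $\tilde X = X\circ\varphi$, and apply the chain rule. In fact your version is the more careful one --- you correctly pull $X_T$ back to a vector field $W$ on $B$ via $DX$ and address existence of the flow and boundary tangency, whereas the paper writes the defining ODE with a scalar right-hand side $-\bigl(\f{d}{dt}X\p X_T\bigr)$ and omits these points.
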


\begin{proof}
Let $\varphi(u,v,t)$ be a smooth family of diffeomorphisms of $B$ such that
\begin{equation*}
\f{d}{d t}\varphi(u,v,t)=-\lf\f{d}{dt}X(u,v,t)\p X_T(u,v,t)\rg\quad\mbox{and}\quad \varphi|_{S^1}(\p,0)={\it id}|_{S^1}.
\end{equation*}

We consider the following reparametrization of $X$
$$\tilde{X}(u,v,t)=X(\varphi(u,v,t),t).$$
Thus, the chain rule implies
\begin{eqnarray*}
\f{d}{dt}\tilde{X}(u,v,t)&=&\mathcal{V}(\varphi(u,v),t)\vec{n}(\varphi(u,v),t).
\end{eqnarray*}
\end{proof}

Now, we will prove that the function $\cv(\p,t)$ is related with the Gaussian curvature of $X(\p,t)$ in the following way

\begin{Lemma}
\label{cotaV}
Under the same hypothesis of Lemma \ref{evolutionX}, the normal component of the evolution of $X(\p,t)$ satisfies
\begin{equation*}
\lf \f{d}{dt}X(\p,t)\p\vec{n}(\p,t)\rg^2=\mathcal{V}^2(\p,t)\leq |K(\p,t)|.
\end{equation*}
\end{Lemma}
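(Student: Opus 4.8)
The plan is to express the normal velocity $\cv(\p,t)$ of the minimal surface in terms of data coming from the boundary evolution and then estimate it using the Gauss equation together with minimality. The starting observation is that $X(\p,t)$ is a minimal surface for every $t$, so mean curvature $H\equiv 0$ and hence the Gaussian curvature satisfies $K=-|A|^2/2$ where $A$ is the second fundamental form; equivalently, if $\kappa_1=-\kappa_2$ denote the principal curvatures, then $|K|=\kappa_1^2$. Thus the inequality to prove, $\cv^2\le|K|$, amounts to showing $|\cv|\le|\kappa_1|$, i.e. the normal speed is bounded by the (absolute value of the) principal curvature. So the first step I would take is to set up the geometry precisely: write $X(u,v,t)$ with the normal evolution $\frac{d}{dt}X=\cv\,\vn$ guaranteed by Lemma \ref{evolutionX}, and record that on the boundary $X(\p,t)$ agrees with $\g(\p,t)$, which evolves by $\frac{\d}{\d t}\g=kN$ (up to tangential terms, using \eqref{CSFgeneral}).

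The key computational step is to differentiate the condition that $X$ is minimal — or more robustly, to use a pointwise argument. I would argue as follows: fix a point $q$ in the interior of the surface at time $t$, and consider the family of surfaces $X(\p,t')$ near $t'=t$. Because each $X(\p,t')$ is area-minimizing with boundary $\g(\p,t')$, and the boundary is moving with normal speed at most $\sup k$, one expects the interior normal speed to be controlled. The cleanest route, though, is infinitesimal: the normal variation $\cv\vn$ of a minimal surface, when we also demand that the varied surfaces remain minimal, must satisfy the Jacobi equation $\Delta_X\cv + |A|^2\cv = 0$ plus the effect of the moving boundary. That is a linear elliptic equation for $\cv$ on $X(\p,t)$ with Dirichlet data on $\g(\p,t)$ given by the normal component of $kN$. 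Then a maximum-principle / gradient-estimate argument on this equation would give the pointwise bound $\cv^2\le|A|^2/\text{(something)}$; combined with $|K|=|A|^2/2$ one reads off $\cv^2\le|K|$. I would present this as: the Jacobi field $\cv$ attains its extrema, and at an interior maximum $\Delta_X\cv\le 0$ forces $|A|^2\cv^2\le(\text{boundary contribution})$, while the structure of a minimal disk of small total curvature lets one absorb constants.

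An alternative and perhaps more transparent approach, which I would actually try first, is purely local differential geometry without invoking the Jacobi equation globally. Parametrize a neighbourhood of a fixed point as a graph over its tangent plane; then minimality is the equation $(1+|\nabla f|^2)\Delta f - \nabla f\cdot\nabla|\nabla f|^2/2=\dots=0$ (the minimal surface equation), and $K$ is expressed through the Hessian of $f$ at the point where $\nabla f=0$ as $\det D^2 f$. The total derivative $\frac{d}{dt}X\cdot\vn$ at that point equals $\d_t f$ evaluated where $\nabla f=0$. Differentiating the minimal surface equation in $t$ gives $\Delta(\d_t f)=0$ to leading order at the base point, so $\d_t f$ is, infinitesimally, harmonic; the constraint that on the boundary $\d_t f$ matches $kN\cdot\vn$ and the two-sided bound coming from $H=0$ (the two principal curvatures are $\pm\kappa_1$) pin down $|\d_t f|\le|\kappa_1|=\sqrt{|K|}$ at an interior critical point of $f$. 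Squaring yields the claim.

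The main obstacle I anticipate is making the "moving boundary" contribution rigorous: the surface is only assumed regular up to the boundary and $C^{1,\a}$ in time, so one cannot freely differentiate twice in $t$, and the Jacobi-field/harmonicity heuristic needs to be replaced by a genuine barrier or comparison argument that uses only the regularity actually available (continuity and regularity up to the boundary, plus the smooth interior convergence established in Theorem \ref{convergeofsurface}). A secondary difficulty is controlling constants: showing $\cv^2\le|K|$ exactly, rather than $\cv^2\le C|K|$, will likely require exploiting that $X(\p,t)$ is a minimal disk with total curvature $<2\pi$ (Proposition \ref{boundedK}), so that the geometry is close to flat and the Schauder / interior gradient estimates give the sharp constant in the limit. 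I would handle this by first proving the bound at the "deepest" interior point where $\cv$ is extremal, where the elliptic inequality degenerates to the clean algebraic relation $|A|^2\cv^2\le(\cv\Delta_X\cv)|_{\text{extremum}}+\text{bdry}\le\text{bdry}$, and then checking the boundary term is dominated by $|K|$ using $|k_g|\le|k|$ exactly as in the proof of Proposition \ref{boundedK}.
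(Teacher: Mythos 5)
Your overall skeleton coincides with the paper's: derive a second-order linear elliptic equation satisfied by $\cv=\f{d}{dt}X\p\vn$ on the disk, observe that on the boundary $\cv$ equals the normal curvature $k_N$ of $\g(\p,t)$ (since there $\f{d}{dt}X=kN$ up to tangential terms), bound $|\cv|$ in the interior by its boundary values via the maximum principle, and finish with Euler's formula plus minimality ($\kappa_2=-\kappa_1$, hence $k_N^2\le\kappa_1^2=|K|$). You clearly have the first and last ingredients. The genuine gap is the middle one: you never derive the equation, and the equation you invoke --- the Jacobi equation $\Delta\cv+|A|^2\cv=0$ --- has a \emph{nonnegative} zeroth-order coefficient, for which the weak maximum principle fails. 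Your proposed mechanism (``at an interior maximum $\Delta\cv\le 0$ forces $|A|^2\cv^2\le$ boundary contribution'') does not close: if $\Delta\cv=-|A|^2\cv$, then at a positive interior maximum both sides are $\le 0$ and there is no contradiction; a Jacobi field on a minimal disk can exceed its boundary supremum. The paper instead computes, by commuting $\f{d}{dt}$ with the Laplace--Beltrami operator and using $\f{d}{dt}g_{ij}=2\cv h_{ij}$ together with $\Delta X=0$, that $\Delta\cv=6|K|\cv$, i.e.\ $\mathcal{L}\cv=0$ with $\mathcal{L}=\Delta-6|K|$, whose zeroth-order coefficient $-6|K|\le 0$ has exactly the sign required by the classical maximum principle. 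That sign is where the proof lives or dies, and it is precisely the point your write-up leaves unresolved (note that it is in tension with the Jacobi equation you quote, so it must be checked, not assumed).

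Two smaller remarks. Your worries about sharp constants, Schauder estimates and barriers are unnecessary once the equation has the right sign: the weak maximum principle gives $\sup_B\cv^2\le\sup_{\d B}k_N^2$ with constant one, and the boundary bound $k_N^2\le|K|$ is just Euler's theorem for a minimal surface; no smallness of total curvature enters this lemma. Your ``graph'' variant is not salvageable as stated: if $\d_t f$ were harmonic it would be controlled by its boundary values, not by the local quantity $\sqrt{|K|}$ at an interior critical point of $f$; the pointwise bound by $\sqrt{|K|}$ is available only on the boundary, where $\cv=k_N$, and must be propagated inward by the maximum principle.
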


\begin{proof}
We will prove that $\mathcal{V}$ satisfies an elliptic equation.

Firstly, since $\f{d}{d t}X=\cv \vn$ then
\begin{eqnarray*}
\f{d}{d t}g_{ij}&=&\f{d}{dt}\d_i X\p\d_j X+\d_i X\p\f{d}{dt}\d_j X\\
&=&\d_i(\cv\vn)\p\d_j X+\d_iX\p\d_j(\cv\vn)\\
&=&2\cv h_{ij}.
\end{eqnarray*}
This implies
\begin{equation}
\label{evolutiongij}
\f{d}{dt} g^{ij}=-2\cv h^{ij}.
\end{equation}

Thus,
\begin{equation}
\label{deltaV}
\Delta \cv=\Delta\lf\f{d}{dt} X\p\vn\rg=\Delta\lf\f{d}{dt} X\rg\p\vn+2\nabla\lf\f{d}{dt} X\rg\p\nabla \vn+\f{d}{dt} X\p\Delta \vn.
\end{equation}

Moreover, note that
\begin{eqnarray*}
\f{d}{dt} \lf\Delta X\rg&=&\f{d}{dt} \lf\f{1}{\sqrt{g}}\d_i(\sqrt{g}g^{ij}\d_j X)\rg\\
&=&\f{1}{\sqrt{g}}\d_i(\sqrt{g}\f{d}{dt} g^{ij}\d_j X)+\Delta\lf\f{d}{dt} X\rg.
\end{eqnarray*}
Since $X$ is a minimal surface, then
\begin{equation}
\label{1}
\begin{split}
\Delta\lf\f{d}{dt} X\rg&=-\f{1}{\sqrt{g}}\d_i\lf\sqrt{g}\f{d}{dt} g^{ij}\d_j X\rg\\
&\underset{\eqref{evolutiongij}}{=}\f{2\cv}{\sqrt{g}}\d_i(\sqrt{g}h^{ij}\d_j X)+2h^{ij}\d_i \cv\d_j X.
\end{split}
\end{equation}

On the other hand, using \eqref{evolutiongij} we get
\begin{equation}
\label{2}
\begin{split}
2\nabla\lf\f{d}{dt} X\rg\p\nabla \vn&=2g^{ij}\d_i\lf\f{d}{dt} X\rg\p \d_j \vn\\
&=2\cv g^{ij}(\d_i\vn\p \d_j\vn),
\end{split}
\end{equation}
and
\begin{equation}
\label{3}
\begin{split}
\f{d}{dt} X\p\Delta \vn&=\f{d}{dt} X\p \lf\f{1}{\sqrt{g}}\d_i(\sqrt{g}g^{ij}\d_j\vn)\rg\\
&=-\cv g^{ij}(\d_i\vn\p\d_j\vn).
\end{split}
\end{equation}
Thus, replacing \eqref{1}, \eqref{2} and \eqref{3} in \eqref{deltaV}, we obtain
\begin{eqnarray*}
\Delta \cv&=&\f{2\cv}{\sqrt{g}}\d_i(\sqrt{g}h^{ij}\d_j X)\p\vn+2h^{ij}\d_i\cv(\d_j X\p\vn)+\cv g^{ij}(\d_i\vn\p \d_j\vn)\\
&=&\f{2\cv}{\sqrt{g}}\d_i(\sqrt{g}h^{ij}\d_j X)\p\vn+\cv g^{ij}(\d_i\vn\p \d_j\vn)\\
&=&2\cv|A|^2+\cv g^{ij}(\d_i\vn\p \d_j\vn).
\end{eqnarray*}
Since this quantity is invariant under coordinates changes, we have
\begin{equation}
\Delta\cv=3\cv|A|^2=6\cv|K|.
\end{equation}
Therefore, if we consider the elliptic operator $\mathcal{L}(u)=\Delta u-6|K|u$ and $k_N(\p,t)$ denotes the normal curvature of $\g(\p,t)$ then we have
\begin{equation*}
\left\{ \begin{array}{cl}
\mathcal{L}(\mathcal{V})=0 &\mbox{in}\, B \\
\mathcal{V}=k_N & \mbox{on}\, \d B.
\end{array} \right.
\end{equation*}

Thus, using the maximum principle we get
\begin{equation*}
\mathcal{V}^2=\lf\f{d}{d t}X\p\vec{n}\rg^2\leq \lf\f{d}{d t}\g\p\vec{n}\rg^2=\lf k\,N\p\vec{n}\rg^2=(k_N)^2\leq |K(\p,t)|.
\end{equation*}
The last inequality comes from the Euler's Theorem (on principle curvatures) and the minimality of $X(\p,t).$
\end{proof}

\section{Isoperimetric Ratio}
\label{sectionradio}
In this section we will define and study the isoperimetric ratio, that it will be the key to prove Theorem A in the next section.

Recall that we always assume that $\g(\p,t)$ is an embedded curve such that $\g(\p,0)$ has total curvature less than $4\pi$ and evolves by curve shortening flow. Following Section \ref{surface} we have that there exists a unique disc-type minimal surface $X(\p,t)$ with boundary $\g(\p,t)$.


We start by defining a piecewise geodesic minimizing and the isoperimetric ratio. Next, we will study its spatial variation to obtain its behavior through the evolution by curvature of $\g(\p,t)$

\begin{Definition}
\label{def-hyp}
For fixed $t<\w$, consider $\g(\p,t):\left(\f{-\pi}{2},\f{\pi}{2}\right]\to\R^3$ a space curve evolving by curve shortening flow such that the total curvature of $\g(\p,0)$ is less than $4\pi$ and $\g(\p,t)$ remains embedded for every $t\in[0,\w)$ with finite singular time $\w$. Let $X(\p,t)$ be the regular in space-time minimal surface enclosed by $\g(\p,t)$ with Gaussian curvature $K(\p,t)$ uniformly bounded in time. Given two different points $\g(a,t)$ and $\g(b,t)$ on $\g(\p,t)$, we define the extrinsic distance ${\it d}_{ab}^{\,t}$ the minimum of the lengths of curves that $\g(a,t)$ and $\g(b,t)$, this distance is realized by a piecewise geodesic on $X(\p,t)$.
\end{Definition}

We will fix a piecewise geodesic that realizes the distance ${\it d}_{ab}^{\,t}$ and denote by
$$\G_{ab}(x,t)=X(u(x,t),v(x,t),t),\quad (x,t)\in [0,1]\times[0,\w),$$
such that $$\G_{ab}(0,t)=\g(b,t),\quad \G_{ab}(1,t)=\g(a,t);$$
while, the arc-length of $\g(\p,t)$ between $\g(a,t)$ and $\g(b,t)$ as ${\it l}_{ab}^{\,t}$ (intrinsic distance).

Following \cite{Distanceprincipio}, if $L_t$ is the total length of $\g(\p,t)$, the intrinsic distance function is only smoothly defined for $0\leq {\it l}_{ab}^{\,t}< \f{L_t}{2}$ with conjugate points when ${\it l}_{ab}^{\,t}=\f{L_t}{2}$. Thus, we consider a smooth function $\psi$ defined by
\begin{equation}
\psi(a,b,t):=\f{L_t}{\pi}\sin\lf\f{\pi {\it l}_{ab}^{\,t}}{L_t}\rg,
\end{equation}

and the isoperimetric ratio $\mathcal{G}$ defined by
\begin{equation}
\label{definitionradio}
\mathcal{G}(a,b,t):=\f{\psi(a,b,t)}{{\it d}_{ab}^{\,t}}.
\end{equation}

\begin{Remark}
{\rm It is easy to see that for every $t\in[0,\w)$, the isoperimetric ratio $\mathcal{G}$ has its global minimum when $a=b$. Thus, we will study the value maximum of $\mathcal{G}.$}

Therefore, we may assume w.l.o.g that for a fixed $t$ the maximum is attained at $a\ne b$. Fixing a point we define the arc-length parameter and assume that $s(a)<s(b)$.
\end{Remark}

For the proof of Theorem A in the next section, it is necessary to understand the behavior in time of $\mathcal{G}$. For this, we need to study its spatial variation.

Let $a_\e$ and $b_\e$ be the variations of $a$ and $b$, respectively, such that
\begin{equation*}
\left.\f{d}{d\e}\right|_{\e=0}s(a_\e)=\mathcal{A},\quad \left.\f{d}{d\e}\right|_{\e=0}s(b_\e)=\mathcal{B}\quad\mbox{and}\quad \left.\f{d^2}{d\e^2}\right|_{\e=0}s(a_\e)=\left.\f{d^2}{d\e^2}\right|_{\e=0}s(b_\e)=0.
\end{equation*}

For fixed $t\in[0,\w)$, we define $\G_\e(\p,t):[0,1]\to X(\p,t)$ as a variation of $\G_{ab}(\p,t)$ such that
$$\G_\e(0,t)=\g(b_\e,t),\quad \G_\e(1,t)=\g(a_\e,t)\quad\mbox{and}\quad \G_0(x,t)=\G_{ab}(x,t).$$

If ${\it d}^{\,t}_\e$ denotes the length of $\G_\e(\p,t)$ and ${\it l}_\e^{\,t}$ denotes the length of $\g(\p,t)$ between $a_\e$ and $b_\e$, then we define
$$\psi(\e,t):=\f{L_{t}}{\pi}\sin\lf\f{\pi {\it l}_{\e}^{\,t}}{L_{t}}\rg\quad\mbox{and}\quad \mathcal{G}(a_\e,b_\e,t)=\f{\psi(\e,t)}{{\it d}_\e^{\,t}}.$$

In this section we will denote by $T_\e(\p,t)$ and $T_\g(\p,t)$ to the unit tangent vector to $\G_\e(\p,t)$ and the unit tangent vector to $\g(\p,t),$ respectively.

To analyze the first spatial variation of $\mathcal{G}$, we will start by computing the first spatial variation of ${\it d}_{ab}^{\,t}$ and $\psi(a,b,t).$

\begin{Proposition}
\label{d_e}
Under the hypotheses of Definition \ref{def-hyp}, for a fixed $t$ assume that the maximum of $\mathcal{G}(\cdot,\cdot,t)$ is attained at $(a,b)$. Then the first spatial variation of ${\it d}_{ab}^{\,t}$ is given by
\begin{equation*}
\left.\f{d}{d \e}\right|_{\e=0}{\it d}_\e^{\,t}=\mathcal{A}T_0(1,t)\p T_\g(a,t)-\mathcal{B}T_0(0,t)\p T_\g(b,t).
\end{equation*}
\end{Proposition}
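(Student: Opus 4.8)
The plan is to compute the first variation of arc length of the piecewise geodesic $\G_\e(\p,t)$ in the standard way from Riemannian geometry, and then exploit the fact that $\G_0 = \G_{ab}$ is a \emph{minimizing} piecewise geodesic to kill all the ``interior'' terms. Write $\G_\e(x,t) = X(u(x,\e),v(x,\e),t)$ and let $J(x) = \left.\f{\d}{\d\e}\right|_{\e=0}\G_\e(x,t)$ be the variation field along $\G_0$, which is a vector field tangent to the surface $X(\p,t)$ along the curve. Then
\begin{equation*}
\left.\f{d}{d\e}\right|_{\e=0}{\it d}_\e^{\,t} = \left.\f{d}{d\e}\right|_{\e=0}\I{0}{1}\left|\d_x\G_\e\right|dx = \I{0}{1}\f{\langle \nabla_{\d_x}J,\,\d_x\G_0\rangle}{|\d_x\G_0|}\,dx,
\end{equation*}
where $\nabla$ is the Levi-Civita connection of $X(\p,t)$ (equivalently the ambient connection projected onto the tangent plane — but since $J$ is tangent and $\d_x\G_0$ is tangent, the normal parts do not contribute to the inner product). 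Parametrizing $\G_0$ by arc length so $|\d_x\G_0| = {\it d}_{ab}^{\,t}$ is constant and $T_0 := \d_x\G_0/|\d_x\G_0|$ is the unit tangent, integrate by parts on each smooth geodesic sub-arc.

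On each such sub-arc, $\nabla_{T_0}T_0 = 0$ (geodesic equation), so $\I{}{}\langle\nabla_{\d_x}J, T_0\rangle dx = \langle J, T_0\rangle\big|_{\text{endpoints}}$ with no interior integral remaining. Summing over the finitely many sub-arcs, the contributions at each interior breakpoint appear twice with opposite signs; here I use that $\G_{ab}$ is length-minimizing, which forces $T_0$ to be continuous across the breakpoints (a minimizing piecewise geodesic has no corners in the interior — otherwise one could shorten it), so these interior boundary terms cancel in pairs. Hence only the two endpoints $x=0$ and $x=1$ survive:
\begin{equation*}
\left.\f{d}{d\e}\right|_{\e=0}{\it d}_\e^{\,t} = \langle J(1), T_0(1,t)\rangle - \langle J(0), T_0(0,t)\rangle.
\end{equation*}
Finally I identify the endpoint variation fields: since $\G_\e(1,t) = \g(a_\e,t)$, we get $J(1) = \left.\f{d}{d\e}\right|_{\e=0}\g(a_\e,t) = \mathcal{A}\,T_\g(a,t)$ by the chain rule and the definition of $\mathcal{A} = \left.\f{d}{d\e}\right|_{\e=0}s(a_\e)$ together with $\g$ being unit-speed in $s$; similarly $J(0) = \mathcal{B}\,T_\g(b,t)$. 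Substituting gives exactly
\begin{equation*}
\left.\f{d}{d\e}\right|_{\e=0}{\it d}_\e^{\,t} = \mathcal{A}\,T_0(1,t)\p T_\g(a,t) - \mathcal{B}\,T_0(0,t)\p T_\g(b,t).
\end{equation*}

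The main subtlety — and the place the hypothesis that $(a,b)$ is a maximum of $\mathcal{G}$ really enters, beyond just giving us a fixed minimizing geodesic to vary — is justifying that differentiating under the integral sign and the interior-cancellation step are legitimate: one needs the minimizing piecewise geodesic $\G_{ab}(\p,t)$ to vary smoothly with $\e$, its breakpoint set to stay finite and structurally stable under the variation, and $T_0$ to be genuinely continuous at interior vertices. The regularity of $X(\p,t)$ up to the boundary (assumed in the paper) plus the first-order optimality of a length-minimizer handle continuity of $T_0$ at breakpoints; a standard argument (the geodesic sub-arcs depend smoothly on endpoints, and a minimizer cannot have an interior corner) handles the rest. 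I would state these as consequences of the standing regularity assumptions and the minimality in Definition \ref{def-hyp}, and otherwise the computation is the routine first-variation-of-energy/length formula.
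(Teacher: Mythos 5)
Your proposal is correct and follows essentially the same route as the paper: both compute the first variation of length by differentiating under the integral, integrating by parts, and using the geodesic equation (in the paper's ambient formulation, the normality of $\f{\d^2}{\d x^2}\G_\e$ to the surface against the tangency of $\f{d}{d\e}\G_\e$; in yours, $\nabla_{T_0}T_0=0$ against a tangent variation field $J$) to kill the interior term, leaving only the endpoint contributions $\mathcal{A}\,T_0(1,t)\p T_\g(a,t)-\mathcal{B}\,T_0(0,t)\p T_\g(b,t)$. Your explicit treatment of the interior breakpoints of the piecewise geodesic (cancellation of corner terms because a minimizer has a continuous tangent) is a point the paper passes over silently, and is a welcome addition rather than a deviation.
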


\begin{proof}
We have that $\G_\e(x,t)=X(u(x,t,\e),v(x,t,\e),t)$ and ${\it d}_\e^{\,t}=\I{\G_\e}{}ds_\e(t)$. Thus, we compute
\begin{equation}
\label{ded}
\begin{split}
\f{d}{d \e}{\it d}_\e^{\,t}&=\f{d}{d \e}\I{0}{1}\left\|\f{\d}{\d x}\G_\e(\p,t)\right\| dx\\
&=\I{0}{1}T_\e(\p,t)\p\f{\d}{\d x}\f{d}{d \e}\G_\e(\p,t)dx\\
&=T_\e(\p,t)\p\left.\f{d}{d \e}\G_\e(\p,t)\right|_0^1-\I{0}{1}\f{\d}{\d x}T_\e(\p,t)\p\f{d}{d \e}\G_\e(\p,t)dx.
\end{split}
\end{equation}

Since $\G_\e(\p,t)$ is a piecewise geodesic, it satisfies the following equations
\begin{equation}
\label{tangencial}
\f{\d^2}{\d x^2}\G_\e(\p,t)\p X_u(\p,t)=\f{\d^2}{\d x^2}\G_\e(\p,t)\p X_v(\p,t)=0\quad\mbox{and}\quad \f{\d^2}{\d x^2}\G_\e(\p,t)\p \vn(\p,t)=II_{t}(u_x,v_x),
\end{equation}
where $II_{t}$ is the second fundamental form associated to $X(\p,t)$.

On the other hand, we know that
\begin{eqnarray*}
\f{\d}{\d x} T_\e(\p,t)&=&\f{1}{\left\|\f{\d}{\d x}\G_\e(\p,t)\right\|}\f{\d^2}{\d x^2}\G_\e(\p,t)+\f{1}{\left\|\f{\d}{\d x}\G_\e(\p,t)\right\|^2}\lf\f{\d^2}{\d x^2}\G_\e(\p,t)\p \f{\d}{\d x}\G_\e(\p,t)\rg T_\e(\p,t)\\
&\underset{\eqref{tangencial}}{=}&\f{1}{\left\|\f{\d}{\d x}\G_\e(\p,t)\right\|}\f{\d^2}{\d x^2}\G_\e(\p,t),
\end{eqnarray*}

and this implies
$$\f{\d}{\d x}T_\e(\p,t)\p\f{d}{d \e}\G_\e(\p,t)=\f{\d}{\d x}T_\e(\p,t)\p\lf u_\e X_u(\p,t)+v_\e X_v(\p,t)\rg=0.$$

Thus, following \eqref{ded} we obtain
\begin{equation}
\label{firstvariationd}
\f{d}{d \e}{\it d}_\e^{\,t}=\lf\f{d}{d\e}s(a_\e)\rg T_\e(1,t)\p T_\g(a_\e,t)-\lf\f{d}{d\e}s(b_\e)\rg T_\e(0,t)\p T_\g(b_\e,t).
\end{equation}

Therefore, $$\left.\f{d}{d \e}\right|_{\e=0}{\it d}_\e^{\,t}=\mathcal{A}T_0(1,t)\p T_\g(a,t)-\mathcal{B}T_0(0,t)\p T_\g(b,t).$$
\end{proof}

\begin{Proposition}
\label{psi_e}
Under the hypotheses of Proposition \ref{d_e}, the first spatial variation of $\psi(a,b,t)$ is given by
$$\left.\f{d}{d \e}\right|_{\e=0}\psi(\e,t)=\lf\mathcal{B}-\mathcal{A}\rg\cos\lf\f{\pi {\it l}_0^{\,t}}{L_{t}}\rg.$$
\end{Proposition}

\begin{proof}
Throughout this proof we assume that $t$ is fixed. We compute
\begin{equation}
\label{firstvariationpsi}
\begin{split}
\f{d}{d\e}\psi(\e,t)&=\cos\lf\f{\pi {\it l}_\e^{\,t}}{L_{t}}\rg\p\f{d}{d\e}{\it l}_\e^{\,t}\\
&=\cos\lf\f{\pi {\it l}_\e^{\,t}}{L_{t}}\rg\lf\f{d}{d\e}s(b_\e)-\f{d}{d\e}s(a_\e)\rg.
\end{split}
\end{equation}

Therefore, $$\left.\f{d}{d \e}\right|_{\e=0}\psi(\e,t)=\cos\lf\f{\pi {\it l}_0^{\,t}}{L_{t}}\rg\lf\mathcal{B}-\mathcal{A}\rg.$$
\end{proof}

Using Proposition \ref{d_e} and \ref{psi_e} we obtain the first spatial variation of $\mathcal{G}(\cdot,\cdot,t)$ at its maximum

\begin{Lemma}
\label{firstrelation}
Under the hypotheses of Lemma \ref{d_e}, the tangent vectors $T_\g(\p,t)$ and $T_0(\p,t)$ satisfy
\begin{equation*}
T_0(1,t)\p T_\g(a,t)=T_0(0,t)\p T_\g(b,t)=-\cos\lf\f{\pi {\it l}_0^{\,t}}{L_{t}}\rg\p\f{{\it d}_0^{\,t}}{\psi(0,t)}.
\end{equation*}
\end{Lemma}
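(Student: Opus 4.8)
The plan is to use the first-order optimality conditions at the interior maximum of $\mathcal{G}(\cdot,\cdot,t)$, combined with the two explicit first-variation formulas already established in Propositions \ref{d_e} and \ref{psi_e}. Since $\mathcal{G}(a_\e,b_\e,t)=\psi(\e,t)/{\it d}_\e^{\,t}$ attains an interior maximum at $\e=0$, the quotient rule gives $0=\left.\f{d}{d\e}\right|_{\e=0}\mathcal{G}=\big({\it d}_0^{\,t}\,\psi'(0)-\psi(0,t)\,({\it d}_\e^{\,t})'(0)\big)/({\it d}_0^{\,t})^2$, hence ${\it d}_0^{\,t}\,\psi'(0)=\psi(0,t)\,({\it d}_\e^{\,t})'(0)$ for \emph{every} admissible choice of the variation parameters $\mathcal{A}$ and $\mathcal{B}$. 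First I would substitute the formulas from Propositions \ref{d_e} and \ref{psi_e} into this identity to obtain
$${\it d}_0^{\,t}(\mathcal{B}-\mathcal{A})\cos\!\lf\f{\pi {\it l}_0^{\,t}}{L_t}\rg=\psi(0,t)\Big(\mathcal{A}\,T_0(1,t)\p T_\g(a,t)-\mathcal{B}\,T_0(0,t)\p T_\g(b,t)\Big).$$

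The key step is then to exploit the \emph{independence} of $\mathcal{A}$ and $\mathcal{B}$: the displayed equation is a linear identity in $(\mathcal{A},\mathcal{B})\in\R^2$ that must hold for all such pairs, so the coefficients of $\mathcal{A}$ and of $\mathcal{B}$ must vanish separately. Matching the coefficient of $\mathcal{A}$ gives $-{\it d}_0^{\,t}\cos\!\lf\f{\pi {\it l}_0^{\,t}}{L_t}\rg=\psi(0,t)\,T_0(1,t)\p T_\g(a,t)$, and matching the coefficient of $\mathcal{B}$ gives ${\it d}_0^{\,t}\cos\!\lf\f{\pi {\it l}_0^{\,t}}{L_t}\rg=-\psi(0,t)\,T_0(0,t)\p T_\g(b,t)$. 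Dividing both by $\psi(0,t)$ (which is positive since $0<{\it l}_0^{\,t}<L_t$ at an interior maximum with $a\ne b$) yields exactly the claimed common value $-\cos\!\lf\f{\pi {\it l}_0^{\,t}}{L_t}\rg\cdot\f{{\it d}_0^{\,t}}{\psi(0,t)}$ for both $T_0(1,t)\p T_\g(a,t)$ and $T_0(0,t)\p T_\g(b,t)$.

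The main obstacle I anticipate is justifying the freedom to vary $a$ and $b$ independently and the legitimacy of treating $\mathcal{G}$ as differentiable in $\e$ at the maximum — in particular, one should check that the piecewise-geodesic distance ${\it d}_{ab}^{\,t}$ is differentiable along the variation (the first-variation formula in Proposition \ref{d_e} already encodes this, the corner terms of the piecewise geodesic contributing nothing because consecutive geodesic segments meet the curve only at the endpoints $x=0,1$), and that the intrinsic distance ${\it l}_{ab}^{\,t}$ is smooth there, which holds because ${\it l}_0^{\,t}<\f{L_t}{2}$ away from the conjugate locus — this is precisely why the smoothing function $\psi$ was introduced, so that $\psi$ stays smooth even if ${\it l}_0^{\,t}$ approaches $\f{L_t}{2}$. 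Once these regularity points are in place, the argument is just the vanishing of the gradient of a scalar function of two variables at an interior critical point, read off coordinatewise.
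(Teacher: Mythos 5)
Your proposal is correct and follows essentially the same route as the paper: both set the first variation of $\mathcal{G}$ to zero at the interior maximum, substitute the formulas from Propositions \ref{d_e} and \ref{psi_e}, and exploit linearity in $(\mathcal{A},\mathcal{B})$ — the paper by choosing the specific pairs $(\mathcal{A},\mathcal{B})=(1,1)$ and $(0,1)$, you by matching coefficients, which extracts the same two identities. The sign bookkeeping in your coefficient matching is consistent with the stated conclusion, so no issues.
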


\begin{proof}
The first spatial variation of the isoperimetric ratio $\mathcal{G}(a,b,t)$ is given by
\begin{equation}
\f{d}{d\e}\mathcal{G}(a_\e,b_\e,t)=\f{1}{{\it d}_\e^{\,t}}\f{d}{d\e}\psi(\e,t)-\f{\psi(\e,t)}{\lf{\it d}_\e^{\,t}\rg^2}\f{d}{d\e}{\it d}_\e^{\,t}.
\end{equation}

Since the maximum of $\mathcal{G}(a_\e,b_\e,t)$ is attained at $\e=0$, from Proposition \ref{d_e} and \ref{psi_e} we get
\begin{equation*}
0=\left.\f{d}{d\e}\right|_{\e=0}\mathcal{G}(a_\e,b_\e,t)=\f{\cos\lf\f{\pi {\it l}_0^{\,t}}{L_{t}}\rg}{{\it d}_0^{\,t}}\lf\mathcal{B}-\mathcal{A}\rg-\f{\psi(0,t)}{\lf{\it d}_0^{\,t}\rg^2}\lf\mathcal{A}T_0(1,t)\p T_\g(a,t)-\mathcal{B}T_0(0,t)\p T_\g(b,t)\rg.
\end{equation*}

Thus, if we take $\mathcal{B}=\mathcal{A}=1$, we obtain
\begin{equation*}
T_0(1,t)\p T_\g(a,t)=T_0(0,t)\p T_\g(b,t).
\end{equation*}
On the other hand, if we set $\mathcal{B}=1, \,\mathcal{A}=0$, we obtain
\begin{equation*}
T_0(0,t)\p T_\g(b,t)=-\cos\lf\f{\pi {\it l}_0^{\,t}}{L_{t}}\rg\p\f{{\it d}_0^{\,t}}{\psi(0,t)}.
\end{equation*}
\end{proof}

For simplicity we will denote $$\a^{t}:=\lf\f{\pi {\it l}_0^{\,t}}{L_{t}}\rg\quad \mbox{and}\quad \a_\e^{t}:=\lf\f{\pi {\it l}_\e^{\,t}}{L_{t}}\rg. $$

Next, to obtain the second spatial variation of $\mathcal{G}(a,b,t)$ we need to compute the second spatial variation of ${\it d}_{ab}^{\,t}$ and $\psi(a,b,t)$. Following \eqref{firstvariationd} and \eqref{firstvariationpsi} we get
\begin{equation}
\label{secondvariationd}
\begin{split}
\left.\f{d^2}{d\e^2}\right|_{\e=0}{\it d}_\e^{\,t}=&\mathcal{A}\lf\left.\f{d}{d\e}\right|_{\e=0}T_\e(1,t)\rg\p T_\g(a,t)-\mathcal{B}\lf\left.\f{d}{d\e}\right|_{\e=0}T_\e(0,t)\rg\p T_\g(b,t)\\
&+\mathcal{A}^2k_\g(a,t) T_0(1,t)\p N_\g(a,t)-\mathcal{B}^2k_\g(b,t) T_0(0,t)\p N_\g(b,t),
\end{split}
\end{equation}

and,
\begin{equation}
\label{secondvariationpsi}
\left.\f{d^2}{d\e^2}\right|_{\e=0}\psi(\e,t)=-\sin\lf\a^{t}\rg\p\f{\pi}{L_{t}}\p\lf\mathcal{B}-\mathcal{A}\rg^2.
\end{equation}

Using these computations we obtain the second spatial variation of $\mathcal{G}(\cdot,\cdot,t)$ at its maximum

\begin{Lemma}
\label{secondrelation}
Under the hypotheses of Lemma \ref{d_e}, the curvature $k_\g(\p,t)$ of $\g(\p,t)$, the tangent vector $T_0(\p,t)$ of $\G_{0}(\p,t)$ and the normal vector $N_\g(\p,t)$ of $\g(\p,t)$ satisfy
$$k_\g(a,t) T_0(1,t)\p N_\g(a,t)-k_\g(b,t) T_0(0,t)\p N_\g(b,t)\geq -4{\it d}_0^{\,t}\lf\f{\pi}{L_{t}}\rg^2.$$
\end{Lemma}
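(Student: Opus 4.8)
The plan is to compute the second spatial variation of $\mathcal{G}$ at the interior maximum $(a,b)$, set it to be nonpositive, and extract the claimed curvature inequality from the resulting expression. Since $\mathcal{G}=\psi/d$ and the first variation vanishes at the maximum, the second derivative takes the simplified form
\begin{equation*}
0\geq\left.\f{d^2}{d\e^2}\right|_{\e=0}\mathcal{G}=\f{1}{{\it d}_0^{\,t}}\left.\f{d^2}{d\e^2}\right|_{\e=0}\psi-\f{\psi(0,t)}{\lf{\it d}_0^{\,t}\rg^2}\left.\f{d^2}{d\e^2}\right|_{\e=0}{\it d}_\e^{\,t},
\end{equation*}
where the cross-term proportional to $\frac{d}{d\e}\psi\cdot\frac{d}{d\e}d$ has been absorbed using that $\frac{d}{d\e}\mathcal{G}=0$; more carefully, one differentiates $\mathcal{G}=\psi/d$ twice and drops the terms that vanish because $\frac{d}{d\e}\psi\cdot d-\psi\frac{d}{d\e}d=0$ at $\e=0$. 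Now substitute the second-variation formulas \eqref{secondvariationd} and \eqref{secondvariationpsi} already derived above.

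The key step is to eliminate the terms involving $\left.\f{d}{d\e}\right|_{\e=0}T_\e$ from \eqref{secondvariationd}. I would choose the admissible variation with $\mathcal{A}=\mathcal{B}=1$: then the $\psi$-second variation \eqref{secondvariationpsi} contributes $-\sin(\a^t)\cdot\frac{\pi}{L_t}\cdot(\mathcal{B}-\mathcal{A})^2=0$, and in \eqref{secondvariationd} the combination
\begin{equation*}
\lf\left.\f{d}{d\e}\right|_{\e=0}T_\e(1,t)\rg\p T_\g(a,t)-\lf\left.\f{d}{d\e}\right|_{\e=0}T_\e(0,t)\rg\p T_\g(b,t)
\end{equation*}
must be shown to be nonnegative (or to be controlled by a quantity of the right sign). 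This is where I expect the main work: $\left.\f{d}{d\e}\right|_{\e=0}T_\e$ is the variation of the unit tangent of the piecewise geodesic under the endpoint displacement, and pairing it against the boundary tangent $T_\g$ should, after using that $\G_0$ is a geodesic and the first-order relation in Lemma \ref{firstrelation} ($T_0(1,t)\cdot T_\g(a,t)=T_0(0,t)\cdot T_\g(b,t)=-\cos(\a^t)\,{\it d}_0^{\,t}/\psi(0,t)$), reduce to something like the squared norm of the tangential part of the endpoint velocity times a geometric factor, hence have a definite sign. Equivalently one recognizes this term as (a multiple of) $\bigl|\tfrac{d}{d\e}\G_\e\bigr|^2$-type boundary contribution from the second variation of arclength of a geodesic, which is $\geq 0$.

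Granting that the $T_\e$-variation term has the favorable sign, the inequality $0\geq\frac{d^2}{d\e^2}\mathcal{G}$ becomes, after multiplying through by $({\it d}_0^{\,t})^2/\psi(0,t)$ and discarding the nonnegative piece,
\begin{equation*}
0\geq -\lf k_\g(a,t)T_0(1,t)\p N_\g(a,t)-k_\g(b,t)T_0(0,t)\p N_\g(b,t)\rg-\text{(nonnegative)},
\end{equation*}
which rearranges to $k_\g(a,t)T_0(1,t)\p N_\g(a,t)-k_\g(b,t)T_0(0,t)\p N_\g(b,t)\geq 0\geq -4{\it d}_0^{\,t}(\pi/L_t)^2$; to get the sharper bound with the explicit constant I would instead retain the $-\sin(\a^t)\tfrac{\pi}{L_t}(\mathcal B-\mathcal A)^2$ term by using a different choice such as $\mathcal{A}=1,\mathcal{B}=-1$ (or $\mathcal{A}=0,\mathcal{B}=2$), so that $(\mathcal B-\mathcal A)^2=4$ and the $\psi$-contribution becomes $-4\sin(\a^t)\tfrac{\pi}{L_t}$; combined with $\psi(0,t)=\tfrac{L_t}{\pi}\sin(\a^t)$ this produces exactly the term $-4{\it d}_0^{\,t}(\pi/L_t)^2$ on the right-hand side. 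The main obstacle is the careful sign analysis of the $\frac{d}{d\e}T_\e$ boundary terms for the piecewise geodesic; everything else is bookkeeping with the already-established first- and second-variation identities and Lemma \ref{firstrelation}.
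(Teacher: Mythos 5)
There is a genuine gap at exactly the point you flag as ``the main work'': the disposal of the boundary terms $\lf\left.\f{d}{d\e}\right|_{\e=0}T_\e(1,t)\rg\p T_\g(a,t)$ and $\lf\left.\f{d}{d\e}\right|_{\e=0}T_\e(0,t)\rg\p T_\g(b,t)$ coming from \eqref{secondvariationd}. Write $Q:=k_\g(a,t)T_0(1,t)\p N_\g(a,t)-k_\g(b,t)T_0(0,t)\p N_\g(b,t)$ for the quantity to be bounded and $P$ for the combination of the $\f{d}{d\e}T_\e$ terms. Your primary plan takes $\mathcal{A}=\mathcal{B}=1$, in which case the maximum condition reduces to $P+Q\geq 0$; from this, $P\geq 0$ only yields $Q\geq -P$, not $Q\geq 0$, so you would actually need $P\leq 0$. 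The heuristic you offer (identifying $P$ with a second-variation-of-arclength boundary contribution, ``which is $\geq 0$'') points, if anything, in the unhelpful direction, and neither sign is established. Your fallback choice $\mathcal{A}=1,\mathcal{B}=-1$ does produce the correct constant, since $(\mathcal{B}-\mathcal{A})^2=4$ together with $\psi(0,t)=\f{L_t}{\pi}\sin(\a^t)$ gives exactly $-4{\it d}_0^{\,t}\lf\f{\pi}{L_t}\rg^2$, but the same unknown-sign boundary terms are still present in that inequality and you do not say how to remove them there either.

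The missing idea, which is the paper's actual device, is symmetrization: write the nonpositivity of the second variation for \emph{both} antisymmetric choices $(\mathcal{A},\mathcal{B})=(1,-1)$ and $(\mathcal{A},\mathcal{B})=(-1,1)$ (inequalities \eqref{a1b-1} and \eqref{a-1b1}). Every term except the $\f{d}{d\e}T_\e$ boundary terms is the same in the two inequalities, while those terms appear with opposite signs, so adding the two inequalities cancels them identically and no sign information about them is ever required; dividing the sum by $2\psi(0,t)$ then gives the stated bound. The rest of your outline --- the simplification of the second variation of $\mathcal{G}=\psi/{\it d}$ at a critical point, the use of Lemma \ref{firstrelation} to absorb the cross terms, and the origin of the factor $4$ --- is consistent with the paper, but without the cancellation step the argument does not close.
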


\begin{proof}
The second spatial variation of the isoperimetric ratio $\mathcal{G}(a,b,t)$ is given by
$$\f{d^2}{d\e^2}\mathcal{G}(a_\e,b_\e,t)=\f{1}{{\it d}_\e^{\,t}}\f{d^2}{d\e^2}\psi(\e,t)-\f{2}{\lf{\it d}_\e^{\,t}\rg^2}\f{d}{d\e}{\it d}_\e^{\,t}\f{d}{d\e}\psi(\e,t)-\f{\psi(\e,t)}{\lf{\it d}_\e^{\,t}\rg^2}\f{d^2}{d\e^2}{\it d}_\e^{\,t}+\f{2\psi(\e,t)}{\lf{\it d}_\e^{\,t}\rg^3}\lf\f{d}{d\e}{\it d}_\e^{\,t}\rg^2.$$

Since the maximum of $\mathcal{G}(a_\e,b_\e,t)$ is attained at $\e=0$, using Proposition \ref{d_e} and \ref{psi_e}, and equations \eqref{secondvariationd} and \eqref{secondvariationpsi}, we have
\begin{eqnarray*}
0&\geq&\left.\f{d^2}{d\e^2}\right|_{\e=0}\mathcal{G}(a_\e,b_\e,t)\\
&=&-\left.\f{1}{\lf{\it d}_0^{\,t}\rg^2}\right[\f{\pi\p{\it d}_0^{\,t}}{L_{t}}\p\sin\lf\a^t\rg\lf\mathcal{B}-\mathcal{A}\rg^2\\
&&\hspace{1.3cm}+2\cos\lf\a^t\rg\p\lf\mathcal{B}-\mathcal{A}\rg\lf\mathcal{A}T_0(1,t)\p T_\g(a,t)-\mathcal{B}T_0(0,t)\p T_\g(b,t)\rg\\
&&\hspace{1.3cm}+\psi(0,t)\mathcal{A}^2k_\g(a,t) T_0(1,t)\p N_\g(a,t)-\psi(0,t)\mathcal{B}^2k_\g(b,t) T_0(0,t)\p N_\g(b,t)\\
&&\hspace{1.3cm}+\psi(0,t)\mathcal{A}\lf\left.\f{d}{d\e}\right|_{\e=0}T_\e(1,t)\rg\p T_\g(a,t)-\psi(0,t)\mathcal{B}\lf\left.\f{d}{d\e}\right|_{\e=0}T_\e(0,t)\rg\p T_\g(b,t)\\
&&\hspace{1.3cm}-\left.\f{2\psi(0,t)}{{\it d}_0^{\,t}}\lf\mathcal{A}T_0(1,t)\p T_\g(a,t)-\mathcal{B}T_0(0,t)\p T_\g(b,t)\rg^2\right].
\end{eqnarray*}

Note that Lemma \ref{firstrelation} implies
\begin{equation}
\label{T}
T_0(1,t)\p T_\g(a,t)+T_0(0,t)\p T_\g(b,t)=-2\cos\lf\a\rg\p \f{{\it d}_0^{\,t}}{\psi(0,t)}.
\end{equation}

Thus, using \eqref{T} and we take $\mathcal{A}=1, \,\mathcal{B}=-1$ we obtain
\begin{equation}
\label{a1b-1}
\begin{split}
0&\geq-\left.\f{1}{\lf{\it d}_0^{\,t}\rg^2}\right[\f{4\pi\p{\it d}_0^{\,t}}{L_{t}}\p\sin\lf\a\rg\\
&\hspace{1.3cm}+\psi(0,t)k_\g(a,t) T_0(1,t)\p N_\g(a,t)-\psi(0,t)k_\g(b,t) T_0(0,t)\p N_\g(b,t)\\
&\hspace{1.3cm}+\left.\psi(0,t)\lf\left.\f{d}{d\e}\right|_{\e=0}T_\e(1,t)\rg\p T_\g(a,t)+\psi(0,t)\lf\left.\f{d}{d\e}\right|_{\e=0}T_\e(0,t)\rg\p T_\g(b,t)\right].
\end{split}
\end{equation}

On the other hand, setting $\mathcal{A}=-1, \,\mathcal{B}=1$ we get
\begin{equation}
\label{a-1b1}
\begin{split}
0&\geq-\left.\f{1}{\lf{\it d}_0^{\,t}\rg^2}\right[\f{4\pi\p{\it d}_0^{\,t}}{L_{t}}\p\sin\lf\a\rg\\
&\hspace{1.3cm}+\psi(0,t)k_\g(a,t) T_0(1,t)\p N_\g(a,t)-\psi(0,t)k_\g(b,t) T_0(0,t)\p N_\g(b,t)\\
&\hspace{1.3cm}-\left.\psi(0,t)\lf\left.\f{d}{d\e}\right|_{\e=0}T_\e(1,t)\rg\p T_\g(a,t)-\psi(0,t)\lf\left.\f{d}{d\e}\right|_{\e=0}T_\e(0,t)\rg\p T_\g(b,t)\right].
\end{split}
\end{equation}

Combining the inequalities \eqref{a1b-1} and \eqref{a-1b1} we obtain

\begin{equation*}
k_\g(a,t) T_0(1,t)\p N_\g(a,t)-k_\g(b,t) T_0(0,t)\p N_\g(b,t)\geq -4{\it d}_0^{\,t}\lf\f{\pi}{L_{t}}\rg^2.
\end{equation*}
\end{proof}

\begin{Remark}
{\rm Note that if $\g(a,t)$ and $\g(b,t)$ are conjugate points for fixed time $t$ (i.e. ${\it l}_{ab}^{\,t}=\f{L_t}{2}$) then $\psi(a,b,t)=\f{L_t}{\pi}$. Therefore, using Proposition \ref{psi_e} and Lemma \ref{firstrelation}, we obtain that the variations $\g(a_\e,t)$ and $\g(b_\e,t)$ satisfy
$$\left.\f{d}{d\e}\right|_{\e=0}\psi(\e,t)=0\quad\mbox{and}\quad T_0(1,t)\p T_\g(a,t)=T_0(0,t)\p T_\g(b,t)=0.$$

Moreover, in this case, Lemma \ref{secondrelation} is satisfied as well.}
\end{Remark}

Now, we want to estimate the behavior of the evolution in time of $\mathcal{G}(a,b,t)$. We start by computing the evolution of ${\it d}_{ab}^{\,t}$ and $\psi(a,b,t)$, fixing $a,b \in\left(\f{-\pi}{2},\f{\pi}{2}\right]$.

\begin{Lemma}
\label{evolutiond}
Let $\g(\p,t):\left(\f{-\pi}{2},\f{\pi}{2}\right]\to\R^3$ be a space curve evolving by curve shortening flow such that the total curvature of $\g(\p,0)$ is less than $4\pi$ and $\g(\p,t)$ remains embedded for every $t\in[0,\w)$ with finite singular time $\w$. Let $X(\p,t)$ be the regular in space-time minimal surface enclosed by $\g(\p,t)$ with Gaussian curvature $K(\p,t)$ uniformly bounded in time. Fixing two different points $\g(a,t)$ and $\g(b,t)$ on $\g(\p,t)$, the evolution in time of ${\it d}_0^{\,t}$ is given by
\begin{equation*}
\f{d}{d t}{\it d}_0^{\,t}=k_\g(a,t)T_0(1,t)\p N_\g(a,t)-k_\g(b,t)T_0(0,t)\p N_\g(b,t)-\I{0}{1}\cv(\p,t) k_0^N(\p,t)ds(t).
\end{equation*}
Here $\cv(\p,t)=\f{d}{dt}X(\p,t)\p\vn(\p,t)$ (defined in \eqref{evolX}) and $k_0^N(\p,t)$ is the normal curvature of $\G_0(\p,t).$
\end{Lemma}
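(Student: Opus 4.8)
The plan is to differentiate the length functional
$
{\it d}_0^{\,t}=\I{0}{1}\left\|\f{\d}{\d x}\G_0(x,t)\right\|dx
$
in time, treating this as a first-variation computation entirely analogous to Proposition \ref{d_e} but with the variation field now being the \emph{genuine time derivative} of the piecewise geodesic rather than a reparametrization variation. The variation vector of the endpoints is the curve shortening flow velocity $k_\g N_\g$ at $\g(a,t)$ and $\g(b,t)$, which accounts for the two boundary terms; the new feature is that the surface itself is moving, so the interior contribution does not vanish but instead picks up a term governed by $\cv$, the normal speed of $X(\p,t)$ from \eqref{evolX}.

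First I would write $\G_0(x,t)=X(u(x,t),v(x,t),t)$ and compute
$
\f{d}{dt}\G_0(x,t)=X_u u_t+X_v v_t+\f{d}{dt}X,
$
where by Lemma \ref{evolutionX} (after the reparametrization that kills the tangential part of the surface evolution) we may take $\f{d}{dt}X=\cv\vn$. Then, exactly as in \eqref{ded},
$
\f{d}{dt}{\it d}_0^{\,t}=\I{0}{1}T_0\p\f{\d}{\d x}\f{d}{dt}\G_0\,dx
= T_0\p\f{d}{dt}\G_0\Big|_0^1-\I{0}{1}\f{\d}{\d x}T_0\p\f{d}{dt}\G_0\,dx,
$
after reparametrizing by arc-length so that $\|\f{\d}{\d x}\G_0\|$ is handled correctly (one keeps the factor $ds(t)$ in the integral). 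For the boundary term, $\f{d}{dt}\G_0(1,t)=k_\g(a,t)N_\g(a,t)$ and $\f{d}{dt}\G_0(0,t)=k_\g(b,t)N_\g(b,t)$ (the tangential components of the endpoint motion integrate out or can be absorbed, as in the spatial case), giving the first two terms of the claimed formula. For the interior term I would use the piecewise-geodesic equations \eqref{tangencial}: since $\f{\d}{\d x}T_0$ has vanishing tangential components along $X$ and only a normal component equal to (a multiple of) the second fundamental form, the pairing $\f{\d}{\d x}T_0\p\f{d}{dt}\G_0$ reduces to $\f{\d}{\d x}T_0\p(\cv\vn)$, i.e.\ $\cv$ times the normal curvature $k_0^N$ of $\G_0$. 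Integrating in arc-length yields $-\I{0}{1}\cv\,k_0^N\,ds(t)$, which is the third term.

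The main obstacle is bookkeeping the interaction between the moving parametrization domain and the moving surface: one must be careful that the tangential pieces $X_u u_t+X_v v_t$ of $\f{d}{dt}\G_0$ contribute nothing to the interior integral (this uses $\f{\d}{\d x}T_0\p X_u=\f{\d}{\d x}T_0\p X_v=0$, which is precisely the geodesic condition in \eqref{tangencial}, exactly as in the proof of Proposition \ref{d_e}), and that the endpoint tangential motion along $\g$ does not spoil the boundary terms — here one invokes that the extrinsic distance is realized by the minimizing piecewise geodesic, so the first-order boundary contribution only sees the component of the endpoint velocity, and the flow velocity $k_\g N_\g$ is already purely normal to $\g$. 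Once these two reductions are in place the formula follows by collecting terms, and the hypotheses of Definition \ref{def-hyp} (regularity of $X$ in space-time, uniform bound on $K$) guarantee that all the objects involved — $\cv$, $k_0^N$, and the geodesic $\G_0$ — are well defined and the differentiations are justified.
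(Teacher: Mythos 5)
Your proposal is correct and follows essentially the same route as the paper: differentiate the length integral, commute $\f{d}{dt}$ with $\f{\d}{\d x}$, integrate by parts, read off the boundary terms from the flow velocity $k_\g N_\g$ at the fixed endpoints $a,b$, and use the piecewise-geodesic equations \eqref{tangencial} to reduce the interior pairing to $\cv\,k_0^N\,ds(t)$. The only cosmetic difference is that the paper never mentions "absorbing" tangential endpoint motion — since $a$ and $b$ are fixed, $\f{d}{dt}\G_0$ at the endpoints is exactly $k_\g N_\g$ with nothing to absorb — but this does not affect the argument.
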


\begin{proof}
We know that
\begin{equation}
\label{evold}
\f{d}{d t}{\it d}_0^{\,t}=\I{0}{1}\f{d}{d t}\left\|\f{\d}{\d x}\G_0(\p,t)\right\|dx=\I{0}{1}T_0(\p,t)\p \f{d}{d t}\f{\d}{\d x}\G_0(\p,t) dx.
\end{equation}

A straightforward computation shows that

\begin{equation}
\label{evold2}
\f{d}{d t}\f{\d}{\d x}\G_0(\p,t)=\f{\d}{\d x}\f{d}{d t}\G_0(\p,t).
\end{equation}

Therefore, using that $\G_0(\p,t)$ is a piecewise geodesic (\eqref{tangencial}) and combining \eqref{evold}, \eqref{evold2} and the definition of normal curvature $k_0^N(\p,t)$, we have
\begin{eqnarray*}
\f{d}{d t}{\it d}_0^{\,t}&=&\I{0}{1}T_0(\p,t)\p\f{\d}{\d x}\f{d}{d t}\G_0(\p,t)dx\\
&=&T_0(\p,t)\p \left.\f{d}{d t}\G_0(\p,t)\right|_0^1-\I{0}{1}\f{\d}{\d x}T_0(\p,t)\p \f{d}{d t}\G_0(\p,t)dx\\
&=&k_\g(a,t)T_0(1,t)\p N_\g(a,t)-k_\g(b,t)T_0(0,t)\p N_\g(b,t)-\I{0}{1}\f{1}{\left\|\f{\d}{\d x}\G_0(\p,t)\right\|}\f{\dd}{\d x^2}\G_0(\p,t)\p\f{d}{d t}X(\p,t)\,dx\\
&=&k_\g(a,t)T_0(1,t)\p N_\g(a,t)-k_\g(b,t)T_0(0,t)\p N_\g(b,t)-\I{0}{1}\cv(\p,t)k_0^N(\p,t) ds(t).
\end{eqnarray*}
\end{proof}

\begin{Lemma}
\label{evolutionpsi}
Under the same hypotheses of Lemma \ref{evolutiond}, $\psi(0,t)$ is a decreasing function in time.
\end{Lemma}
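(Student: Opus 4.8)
The plan is to compute $\frac{d}{dt}\psi(0,t)$ directly from the definition $\psi(0,t)=\frac{L_t}{\pi}\sin\!\lf\f{\pi {\it l}_{0}^{\,t}}{L_t}\rg$ and show the result is negative. Two quantities vary in time here: the total length $L_t$ of $\g(\p,t)$ and the arc-length ${\it l}_0^{\,t}$ between the fixed points $\g(a,t)$ and $\g(b,t)$. First I would recall the standard first-variation formulas for curve shortening flow: since $\f{\d}{\d t}\g=kN$, one has $\f{d}{dt}L_t=-\I{\g}{}k^2\,ds<0$, and more generally, differentiating an arc-length integral over a fixed parameter interval $[a,b]$ gives $\f{d}{dt}{\it l}_0^{\,t}=-\I{a}{b}k^2\,ds$ (here the boundary terms vanish because $a$ and $b$ are held fixed as parameters, so the tangential-velocity boundary contributions are absent — this is the key structural difference from Lemma \ref{evolutiond}).

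Next I would substitute. Writing $u=\pi {\it l}_0^{\,t}/L_t\in[0,\pi]$, a direct computation gives
\begin{equation*}
\f{d}{dt}\psi(0,t)=\f{1}{\pi}\sin u\,\f{d}{dt}L_t+\cos u\lf\f{d}{dt}{\it l}_0^{\,t}-\f{{\it l}_0^{\,t}}{L_t}\f{d}{dt}L_t\rg.
\end{equation*}
Plugging in $\f{d}{dt}L_t=-\I{\g}{}k^2\,ds$ and $\f{d}{dt}{\it l}_0^{\,t}=-\I{a}{b}k^2\,ds$, and noting $\f{{\it l}_0^{\,t}}{L_t}=\f{u}{\pi}$, one gets
\begin{equation*}
\f{d}{dt}\psi(0,t)=-\f{\sin u}{\pi}\I{\g}{}k^2\,ds-\cos u\I{a}{b}k^2\,ds+\f{u\cos u}{\pi}\I{\g}{}k^2\,ds.
\end{equation*}
Since $\I{a}{b}k^2\,ds\le\I{\g}{}k^2\,ds$ and both are nonnegative, it suffices to show $\f{\sin u}{\pi}+\cos u-\f{u\cos u}{\pi}\ge$ something that makes the whole expression negative; more carefully, group as $-\I{\g}{}k^2\,ds\cdot\f{\sin u-u\cos u}{\pi}-\cos u\I{a}{b}k^2\,ds$. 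The elementary inequality $\sin u\ge u\cos u$ for $u\in[0,\pi]$ (equivalently $\tan u\ge u$ on $[0,\pi/2]$ and $\cos u\le 0$ on $[\pi/2,\pi]$) handles the first term; when $\cos u\ge 0$ the second term is also $\le 0$, and when $\cos u<0$ one absorbs it using $\I{a}{b}k^2\le\I{\g}{}k^2$ together with the slack in the first term. This case analysis on the sign of $\cos u$ is the only mildly delicate point.

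The main obstacle I anticipate is not any deep estimate but getting the first-variation-of-length computation cleanly right in a way consistent with the paper's conventions — in particular, being careful that when $a,b$ are fixed \emph{parameters} (Remark \ref{parametrizationg} has $\g$ parametrized on an interval), the boundary terms $T\cdot\f{d}{dt}\g$ genuinely drop out, unlike in Lemma \ref{evolutiond} where $\G_0$ has its endpoints sliding along $\g$. A secondary concern is the endpoint behavior at $u=\pi$ (conjugate points), where $\cos u=-1$; there $\psi=L_t/\pi$ and $\f{d}{dt}\psi=-\f1\pi\I{\g}{}k^2\,ds+\I{a}{b}k^2\,ds\le 0$ since the removed middle arc contributes at least... — actually one must check $\I{a}{b}k^2\le\f1\pi\I{\g}{}k^2$ fails in general, so the correct accounting is $\f{d}{dt}\psi=-\I{\g}{}k^2\cdot 0 - (-1)\I{a}{b}k^2$? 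No: at $u=\pi$, $\sin u=0$, $u\cos u=-\pi$, so $\f{\sin u-u\cos u}\pi=1$, giving $\f{d}{dt}\psi=-\I{\g}{}k^2\,ds+\I{a}{b}k^2\,ds\le0$. So the formula is in fact continuous and correct at the endpoint, which is reassuring. I would present the argument with the grouping above and the single inequality $\sin u\ge u\cos u$, then dispatch the sign-of-$\cos u$ cases in two lines.
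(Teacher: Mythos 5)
Your proposal is correct and follows essentially the same route as the paper: differentiate $\psi$, substitute the first-variation formulas $\f{d}{dt}L_t=-\I{\g}{}k^2\,ds$ and $\f{d}{dt}{\it l}_0^{\,t}=-\I{a}{b}k^2\,ds$, and conclude from the inequality $\sin u\ge u\cos u$. The only difference is that the paper avoids your case analysis on the sign of $\cos u$ by noting that $\a^t=\pi{\it l}_0^{\,t}/L_t$ lies in $[0,\pi/2)$ (since the intrinsic distance is only smoothly defined for ${\it l}_0^{\,t}<L_t/2$), so all three factors $\sin\a^t$, $\cos\a^t$ and $1-\a^t/\tan\a^t$ are nonnegative at once.
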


\begin{proof}
We compute
\begin{equation}
\label{evolpsi}
\f{d}{dt}\psi(0,t)=\f{1}{\pi}\lf\f{d}{dt}L_t\rg\sin(\a^t)+\cos(\a^t)\p\f{d}{dt}{\it l}_0^{\,t}-\f{{\it l}_0^{\,t}}{L_t}\p\cos(\a^t)\p\f{d}{dt}L_t,
\end{equation}
where $\a^t=\f{\pi {\it l}_0^{\,t}}{L_{t}}.$

Moreover, we have
\begin{equation}
\label{evolL}
\begin{split}
\f{d}{d t}L_t&=\f{d}{d t}\I{\g_t}{}ds(t)\\
&=\I{\f{-\pi}{2}}{\f{\pi}{2}}\f{d}{d t}\left\|\f{\d}{\d x}\g(\p,t)\right\|dx\\
&=\I{\f{-\pi}{2}}{\f{\pi}{2}}\f{\d}{\d x}(k_\g(\p,t)\p N_\g(\p,t))\p T_\g(\p,t)dx\\
&=\I{\g_t}{}\lf\f{\d}{\d s}k_\g(\p,t)N_\g(\p,t)-k_\g^2(\p,t)T_\g(\p,t)+k_\g(\p,t)\t_\g(\p,t) B_\g(\p,t)\rg\p T_\g(\p,t)ds(t)\\
&=-\I{\g_t}{}k_\g^2\,d s(t)\leq 0.
\end{split}
\end{equation}

Consequently, the arc-length between $\g(a,t)$ and $\g(b,t)$ satisfies
\begin{equation}
\label{evoll}
\f{d}{dt}{\it l}_0^{\,t}=-\I{a}{b}k_\g^2\,d s(t)\leq 0.
\end{equation}

Replacing \eqref{evolL} and \eqref{evoll} in \eqref{evolpsi}, we obtain
\begin{equation}
\label{evolpsi2}
\begin{split}
\f{d}{dt}\psi(0,t)&=-\f{1}{\pi}\sin(\a^t)\p\I{\g}{}k_\g^2\,ds(t)-\cos(\a^t)\p\I{a}{b}k_\g^2\,ds(t)+\f{{\it l}_0^{\,t}}{L_t}\p\cos(\a^t)\p\I{\g}{}k_\g^2\,ds(t)\\
&=-\f{1}{\pi}\sin(\a^t)\I{\g}{}k_\g^2\,ds(t)\lf 1-\f{\a^t}{\tan(\a^t)}\rg-\cos(\a^t)\p\I{a}{b}k_\g^2\,ds(t).
\end{split}
\end{equation}

Since the intrinsic distance ${\it l}_0^{\,t}$ is only smoothly defined for $0\leq{\it l}_0^{\,t}<\f{L_t}{2}\,$ then $\a\in[0,\f{\pi}{2})$. Thus, we get that $$\,\sin(\a^t)\geq 0,\quad\cos(\a^t)\geq0\quad \mbox{and} \quad\f{\a^t}{\tan(\a^t)}<1.$$

This finishes the proof.
\end{proof}

\begin{Remark}
{\rm Note that Lemma \ref{evolutiond} and \ref{evolutionpsi} hold for any $a,b\in\left(\f{-\pi}{2},\f{\pi}{2}\right]$. Therefore, if $a,b$ are conjugate points (${\it l}_{ab}^{\,t}=\f{L_t}{2}$) Lemma \ref{evolutiond} and \ref{evolutionpsi} hold as well.}
\end{Remark}

Thus, we conclude
\begin{Theorem}
\label{evolutionradio2}
Under the hypotheses of Theorem A, for a fixed $t$ assume that the maximum of $\mathcal{G}(\cdot,\cdot,t)$ is attained at $(a,b)$. Then the isoperimetric ratio is bounded for every $t\in[0,\w)$.
\end{Theorem}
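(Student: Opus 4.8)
The plan is to run a maximum-principle argument in time on the spatial maximum
\[
G(t):=\max_{a\ne b}\mathcal{G}(a,b,t),
\]
using the variation identities already established (Lemmas~\ref{firstrelation} and \ref{secondrelation}) together with the evolution formulas for $d_{ab}^{\,t}$ and $\psi$ (Lemmas~\ref{evolutiond} and \ref{evolutionpsi}). Fix $t_0\in[0,\w)$ and let $(a,b)$ with $a\ne b$ realize $G(t_0)$. By the standard fact that a maximum of a compact family of smooth functions is locally Lipschitz and its upper forward derivative is dominated by the time derivative of any member realizing the maximum, it suffices to bound $\left.\f{d}{dt}\right|_{t_0}\mathcal{G}(a,b,t)$ from above at this fixed pair $(a,b)$.

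Differentiating \eqref{definitionradio} in $t$ gives
\[
\f{d}{dt}\mathcal{G}(a,b,t)=\f{1}{{\it d}_0^{\,t}}\f{d}{dt}\psi(0,t)-\f{\psi(0,t)}{\lf{\it d}_0^{\,t}\rg^{2}}\f{d}{dt}{\it d}_0^{\,t}.
\]
By Lemma~\ref{evolutionpsi} the first term is $\le 0$ and may be discarded, so I need a lower bound on $\f{d}{dt}{\it d}_0^{\,t}$. Lemma~\ref{evolutiond} writes it as the boundary combination $k_\g(a,t)T_0(1,t)\p N_\g(a,t)-k_\g(b,t)T_0(0,t)\p N_\g(b,t)$ minus $\I{0}{1}\cv(\p,t)k_0^N(\p,t)\,ds(t)$. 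This is exactly where spatial maximality is used: since $(a,b)$ is a maximum at $t_0$, Lemma~\ref{secondrelation} bounds the boundary combination below by $-4{\it d}_0^{\,t}\lf\f{\pi}{L_t}\rg^{2}$ — an estimate one cannot get by bounding $k_\g$ directly, as $k_\g$ is unbounded near the singularity. For the integral term, Lemma~\ref{cotaV} gives $|\cv(\p,t)|\le\sqrt{|K(\p,t)|}$, while the geodesic equation for $\G_0$ together with the minimality of $X$ (so $|A|^{2}=-2K$ and the normal curvature is at most the larger principal curvature $\sqrt{|K|}$) gives $|k_0^N|\le\sqrt{|K|}$; hence $|\cv\,k_0^N|\le|K|\le C_K$, where $C_K:=\sup_{t\in[0,\w)}\sup_{X(\p,t)}|K|<\infty$ by hypothesis, so $\left|\I{0}{1}\cv k_0^N\,ds(t)\right|\le C_K\,{\it d}_0^{\,t}$.

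Combining these, $\f{d}{dt}{\it d}_0^{\,t}\ge-\lf\f{4\pi^{2}}{L_t^{2}}+C_K\rg{\it d}_0^{\,t}$ at $t=t_0$, and therefore
\[
\left.\f{d}{dt}\right|_{t_0}\mathcal{G}(a,b,t)\ \le\ \lf\f{4\pi^{2}}{L_{t_0}^{2}}+C_K\rg\f{\psi(0,t_0)}{{\it d}_0^{\,t_0}}\ =\ \lf\f{4\pi^{2}}{L_{t_0}^{2}}+C_K\rg\mathcal{G}(a,b,t_0).
\]
Now I invoke the hypothesis that $\g(\p,t)$ does not shrink to a point: by \eqref{evolL} the length $L_t$ is nonincreasing, and a curve with $L_t\to 0$ would have diameter $\le L_t/2\to 0$ and hence collapse to a point, so $L_t\ge L_\w>0$ for all $t$. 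Consequently $\left.\f{d}{dt}\right|_{t_0}\mathcal{G}(a,b,t)\le C\,G(t_0)$ with $C:=\f{4\pi^{2}}{L_\w^{2}}+C_K$ independent of $t_0$. By the maximum-function argument above, $G$ satisfies $\f{d^{+}}{dt}G(t)\le C\,G(t)$ in the Lipschitz sense, so Gronwall gives $G(t)\le G(0)\,e^{Ct}\le G(0)\,e^{C\w}<\infty$ since $\w<\infty$. Thus $\mathcal{G}(a,b,t)$ is uniformly bounded on its domain; boundedness from below is immediate from the Remark after \eqref{definitionradio}.

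The main obstacle I expect is not in this chain of inequalities but in the regularity bookkeeping: one must check that the first- and second-variation formulas and Lemma~\ref{secondrelation} remain valid at the conjugate-point locus ${\it l}_{ab}^{\,t}=L_t/2$ and at the corners of the piecewise geodesic $\G_0$ (addressed by the Remarks following Lemmas~\ref{secondrelation} and \ref{evolutionpsi}), and that for $t$ near $t_0$ a minimizing piecewise geodesic can be chosen to depend smoothly enough on $t$ for Lemma~\ref{evolutiond} to apply. A secondary technical point is the uniform estimate $|k_0^N|\le\sqrt{|K|}$ along all of $\G_0$, which is precisely where the uniform bound on the Gaussian curvature of $X(\p,t)$ enters in an essential way.
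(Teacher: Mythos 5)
Your proposal is correct and follows essentially the same route as the paper: drop the $\psi$-term via Lemma \ref{evolutionpsi}, bound the boundary combination in Lemma \ref{evolutiond} from below using the second-variation inequality of Lemma \ref{secondrelation} at the spatial maximum, control the integral term by $|K|$ via Lemma \ref{cotaV} and Euler's theorem, and close with Gronwall using $L_t$ bounded away from zero. Your explicit treatment of the upper forward derivative of the maximum and of the product bound $|\cv\,k_0^N|\le|K|$ only makes more precise what the paper's proof does implicitly.
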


\begin{proof}
Since the evolution of $\mathcal{G}(a,b,t)$ is given by
\begin{equation*}
\f{d}{d t}\mathcal{G}(a,b,t)=\f{1}{{\it d}_0^{\,t}}\f{d}{dt}\psi(0,t)-\f{\psi(0,t)}{({\it d}_0^{\,t})^2}\f{d}{dt}{\it d}_0^{\,t},
\end{equation*}

then using Lemma \ref{evolutiond} and \ref{evolutionpsi} we get
\begin{equation}
\label{evolG}
\begin{split}
\f{d}{d t}\mathcal{G}(a,b,t)&\leq-\f{\psi(0,t)}{({\it d}_0^{\,t})^2}\lf k_\g(a,t)T_0(1,t)\p N_\g(a,t)-k_\g(b,t)T_0(0,t)\p N_\g(b,t)\rg\\
&+\f{\psi(0,t)}{({\it d}_0^{\,t})^2}\p\I{0}{1}\cv(\p,t) \p k_0^N(\p,t) ds(t).
\end{split}
\end{equation}

Moreover, Lemma \ref{cotaV} implies that
\begin{equation}
\label{v}
\I{0}{1}\mathcal{V}(\p,t)\p k_0^N(\p,t)ds(t)\leq \I{0}{1}|K(\p,t)|ds(t)\leq {\it d}_0^{\,t}\p \lf\sup_{t\in[0,\w)}|K(\p,t)|\rg.
\end{equation}

Since the maximum of $\mathcal{G}$ is attainted at $(a,b)$, then replacing the result of Lemma \ref{secondrelation} and the inequality \eqref{v} in \eqref{evolG}, we obtain
\begin{equation*}
\f{d}{d t}\mathcal{G}(a,b,t)\leq \mathcal{G}(a,b,t)\lf \f{4\pi^2}{L_{t}^2}+\sup_{t\in[0,\w)}|K(\p,t)|\rg.
\end{equation*}

On the other hand, the Gaussian curvature of $X(\p,t)$ is uniformly bounded for every $t\in[0,\w)$ and $L_{t}>0$ ($\g(\p,t)$ does not shrink to a point), then there exists $\mathcal{C}\in\R$ such that
\begin{equation}
\label{evolG2}
\f{d}{d t}\mathcal{G}(a,b,t)\leq \mathcal{C}\p\mathcal{G}(a,b,t).
\end{equation}

Therefore, the Maximum Principle implies
\begin{equation}
\label{boundedG1}
\mathcal{G}(a,b,t)\leq \mathcal{G}(a,b,0)\exp\lf\mathcal{C}(t)\rg.
\end{equation}

In addition, it is easy to see that $\mathcal{G}(a,b,0)$ is bounded using the Mean value Theorem. Therefore, equation \eqref{boundedG1} implies that the isoperimetric ratio $\mathcal{G}$ is bounded for every $t\in[0,\w)$.
\end{proof}

\begin{Remark}
{\rm It is important to recall that the assumption on $|K(\p,t)|$ (uniformly bounded) implies that the minimal surface is at least $C^{1,\a}$ in time.}
\end{Remark}

\begin{Remark}
\label{invariante}
{\rm It is easy to show that the isoperimetric ratio $\mathcal{G}(a,b,t)$ is invariant under the rescaling along a blow-up sequence (Definition \ref{definicionrescalamiento}), i.e.
\begin{equation*}
\lf\mathcal{G}(a,b,\bt)\rg_n=\f{\psi(0,\bt)_n}{\lf{\it d}_{0}^{\,\bt}\rg_n}=\f{\psi(0,t)}{{\it d}_{0}^{\,t}}=\mathcal{G}(a,b,\bt).
\end{equation*}}
\end{Remark}

Thus, we may prove that the rescaled geodesic $\lf\G_{ab}(\p,\bt)\rg_n$ tends to a straight line when $n$ tends to infinity.

\begin{Proposition}
\label{curvaturetendzero}
If the Gaussian curvature $K(\p,t)$ of $X(\p,t)$ is uniformly bounded in time then the curvature of $\lf\G_{ab}(\p,\bt)\rg_n$ tends to zero when $n$ tends to infinity.
\end{Proposition}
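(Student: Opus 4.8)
The plan is to use that, on a minimal surface, a geodesic has vanishing geodesic curvature and every normal curvature is controlled by $\sqrt{|K|}$, and then to feed in the scaling behaviour of $K$ under the blow-up rescaling. First I would establish the pointwise curvature bound on $X(\p,t)$ itself: on each smooth arc of the piecewise geodesic $\G_0(\p,t)=\G_{ab}(\p,t)$ the tangential part of $\f{\dd}{\d x^2}\G_0$ vanishes (this is \eqref{tangencial}), so the acceleration of $\G_0$ is purely normal to $X(\p,t)$; hence its geodesic curvature is $0$ and its curvature as a space curve equals its normal curvature, $k_{\G_0}(\p,t)=|k_0^N(\p,t)|=|II_t(T_0,T_0)|$. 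Since $X(\p,t)$ is minimal, its principal curvatures are $\pm\sqrt{|K(\p,t)|}$, so Euler's theorem gives $|II_t(T_0,T_0)|\le\sqrt{|K(\p,t)|}$ — precisely the inequality already invoked at the end of the proof of Lemma \ref{cotaV}. Thus $k_{\G_0}(\p,t)\le\sqrt{\,\sup_{X(\p,t)}|K(\p,t)|\,}$ at every point.

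Next I would transfer this to the rescaled picture. The curve $\lf\G_{ab}(\p,\bt)\rg_n$ is the rescaling of the piecewise minimizing geodesic on the minimal surface $X(\p,\,t_n+\l_n^{-2}\bt)$, hence a piecewise geodesic on the minimal surface $X_n(\p,\bt)$; by the scaling identity \eqref{gaussiancurvature} the Gaussian curvature of $X_n(\p,\bt)$ equals $\l_n^{-2}|K(\p,\,t_n+\l_n^{-2}\bt)|$. The standing hypothesis that $K$ is uniformly bounded in time, $\sup_{t\in[0,\w)}|K(\p,t)|=:C_0<\infty$, then gives $\sup_{X_n}|K_n(\p,\bt)|\le\l_n^{-2}C_0$, and applying the first step to $X_n$ the curvature of $\lf\G_{ab}(\p,\bt)\rg_n$ is bounded, at every point of every geodesic sub-arc, by
\[
\sqrt{\,\sup_{X_n}|K_n(\p,\bt)|\,}\ \le\ \l_n^{-1}\sqrt{C_0}.
\]

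Finally, the normalization $k_n\p N_n(p_n,0)=(0,1,0)$ in Definition \ref{definicionrescalamiento} forces $\l_n=k(p_n,t_n)$, and since $\{(p_n,t_n)\}$ is a blow-up sequence $k^2(p_n,t_n)\to\infty$, so $\l_n\to\infty$; hence $\l_n^{-1}\sqrt{C_0}\to0$ and the curvature of $\lf\G_{ab}(\p,\bt)\rg_n$ tends to $0$ as $n\to\infty$, uniformly along the curve. I do not expect a genuine obstacle: the argument is short because both ingredients are essentially already present in the paper, and the only point deserving care is the word \emph{piecewise} — the estimate holds verbatim on each geodesic sub-arc while the finitely many corner points carry no interior curvature, so they do not affect the conclusion. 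Alternatively, the same statement (the rescaled minimizing paths converge to segments of a straight line) follows directly from the smooth convergence $X_n\to$ plane of Theorem \ref{convergeofsurface} and Remark \ref{convergenceuptheboundary}.
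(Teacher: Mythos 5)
Your argument is correct and is essentially the paper's own proof: both use that a piecewise geodesic on the minimal surface has curvature equal to its normal curvature, bound that by $\sqrt{|K|}$ via Euler's theorem and minimality, and then observe that the rescaling divides the curvature by $\l_n\to\infty$. The only (immaterial) discrepancy is that the paper writes $\l_n=k_\g^2(p_n,t_n)$ for the spatial scale where your $\l_n=k(p_n,t_n)$ matches the normalization in Definition \ref{definicionrescalamiento}; in either case $\l_n\to\infty$ along a blow-up sequence and the conclusion follows.
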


\begin{proof}
If $(k_{ab})_n(\p,\bt)$ denotes the curvature of $\lf\G_{ab}(\p,\bt)\rg_n$, then we have
\begin{eqnarray*}
(k_{ab})_n(\p,\bt)&=&\f{k_{ab}(\p,t)}{\l_n},
\end{eqnarray*}
where $k_{ab}(\p,t)$ is the curvature of $\G_{ab}(\p,t)$ and $\l_n=k_\g^2(p_n,t_n)$ is the spatial scale of the rescaling.

On the other hand, as $\G_{ab}(\p,t)$ is a piecewise geodesic then $(k_{ab}(\p,t))^2$ is equal to its normal curvature $((k_{ab}^N)(\p,t))^2.$

Therefore, as $|K(\p,t)|$ is uniformly bounded, we conclude
\begin{equation*}
((k_{ab})_n(\p,\bt))^2=\f{((k_{ab})(\p,t))^2}{\l_n^2}=\f{((k_{ab}^N)(\p,t))^2}{\l_n^2}\leq \f{|K(\p,t)|}{\l_n^2}\underset{n\to\infty}{\longrightarrow} 0.
\end{equation*}
\end{proof}
\section{Main Theorem}
\label{maintheorem}
In this section we conclude the proof of Theorem A using a contradiction argument. More precisely, we will assume that $\g(\cdot,t)$ forms a Type II singularity and show that the isoperimetric ratio $\mathcal{G}$ cannot remain bounded. That contradicts the estimates obtained in Section \ref{sectionradio}.

We start by establishing notation: Let $\g(\p,t)$ be a space curve that evolves by curve shortening flow such that it remains embedded for every $t\in[0,\w)$ and $\g(\p,0)$ has total curvature less than $4\pi.$

Suppose that the evolution of $\g(\p,t)$ forms a singularity at time $\w$. Thus, we consider the sequence of rescaled solutions $\{\g_n(\p,\bt)\}_{n\in\N}$ of $\g(\p,t)\,$ (Definition \ref{definicionrescalamiento}). If we assume that $\g(\p,t)$ forms a Type II singularity at time $\w$, then, using Proposition \ref{TypeII}, there exists an essential blow-up sequence $\{(p_n, t_n)\}$ such that a limit of rescalings along this sequence converges in $C^1$ uniformly on compact subsets of $\R\times[-\infty,\infty]$ to the Grim Reaper.

\begin{Remark}
{\rm Since we know that $\R$ is homeomorphic to any open interval, we will consider that $\g_n$ converges uniformly on compact sets of
$(-\f{\pi}{2},\f{\pi}{2})\times[-\infty,\infty]$ to $\g_\infty.$ Moreover, following Remark \ref{parametrizationg}, we may assume that the parametrization of $\g_n$ is given by:
\begin{equation}
\g_n:\left(-\f{\pi}{2},\f{\pi}{2}\right]\times [-\l^2_{n}t_n,\l^2_{n}(\w-t_n)) \longrightarrow \R^3.
\end{equation}}
\end{Remark}

In this section, we will consider the following compact subset of $(-\f{\pi}{2},\f{\pi}{2})\times\R$,
$$\mathcal{K}_m=\left[-\a(m),\a(m)\right]\times\{0\},$$
where $\a(m):=\lf\f{\pi}{2}-\f{1}{m}\rg.$

For the endpoints of this compact set, we consider the isoperimetric ratio defined in Section \ref{sectionradio} for the space curve $\g$ and for the sequence of rescaled solutions $\{\g_n\}_{n\in\N}$, in the following way:

We consider the piecewise geodesic $\G_m(\p,t_n)$ that minimizes the distance between $\g(-\a(m),t_n)$ and $\g(\a(m),t_n).$ If ${\it d}^{\,t_n}_m$ denotes the length of $\G_m(\p,t_n)$ and ${\it l}_m^{\,t_n}$ denotes the length of $\g(\p,t_n)$ between $-\a(m)$ and $\a(m)$, then we define
$$\psi(m,t_n):=\f{L_{t_n}}{\pi}\sin\lf\f{\pi {\it l}_{m}^{\,t_n}}{L_{t_n}}\rg\quad\mbox{and}\quad\mathcal{G}(m,t_n)=\f{\psi(m,t_n)}{{\it d}_m^{\,t_n}}.$$

Analogously in the rescaled case, we consider the rescaled piecewise geodesic $(\G_m(\p,0))_n$ that minimizes the distance between $\g_n(-\a(m),0)$ and $\g_n(\a(m),0).$ If $({\it d}_m^{\,0})_n$ denotes the length of $(\G_m(\p,0))_n$ and $({\it l}_m^{\,0})_n$ denotes the length of $\g_n(\p,0)$ between $-\a(m)$ and $\a(m)$, then we define
$$\psi(m,0)_n:=\f{L_n}{\pi}\sin\lf\f{\pi ({\it l}_{m}^{\,0})_n}{L_n}\rg\quad\mbox{and}\quad\mathcal{G}(m,0)_n=\f{\psi(m,0)_n}{({\it d}_m^{\,0})_n}.$$

On the other hand, inspired by \cite{Distanceprincipio}, we define an isoperimetric ratio for the Grim Reaper $\g_\infty$ as follows:

Let $(\G_m)_\infty$ be the straight line that joins the points $\g_\infty(-\a(m),0)$ with $\g_\infty(\a(m),0).$ If $({\it d}_{m})_\infty$ denotes the length of $(\G_m)_\infty$ and $({\it l}_{m})_\infty$ denotes the length of $\g_\infty(\p,0)$ between $\g_\infty(-\a(m),0)$ and $\g_\infty(\a(m),0)$, then we define
$$\mathcal{G}(m)_\infty=\f{({\it l}_m)_\infty}{({\it d}_m)_\infty}.$$

Thus, we prove
\begin{Lemma}
\label{radioinf}
The isoperimetric ratio $\mathcal{G}(m)_\infty$ converges to infinity when $m$ tends to infinity.
\end{Lemma}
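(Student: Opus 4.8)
The plan is to compute both quantities $({\it l}_m)_\infty$ and $({\it d}_m)_\infty$ explicitly from the Grim Reaper parametrization $\g_\infty(x,0)=(x,-\ln(\cos x))$ and show that the ratio blows up as $m\to\infty$, i.e. as $\a(m)=\frac{\pi}{2}-\frac1m\to\frac{\pi}{2}$. The key observation is that $\g_\infty(x,0)$ has vertical asymptotes at $x=\pm\frac{\pi}{2}$: as $x\to\pm\frac{\pi}{2}$, the second coordinate $-\ln(\cos x)\to+\infty$. So while the two endpoints $\g_\infty(\pm\a(m),0)$ have first coordinates confined to $(-\frac{\pi}{2},\frac{\pi}{2})$, their heights diverge, forcing the intrinsic arc-length along the curve to diverge much faster than the straight-line (chordal) distance between the endpoints.

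First I would estimate the extrinsic distance $({\it d}_m)_\infty$, which is simply the Euclidean length of the chord joining $\g_\infty(-\a(m),0)$ and $\g_\infty(\a(m),0)$. Since the Grim Reaper is symmetric about the $y$-axis ($-\ln\cos x$ is even), these two points have the same height, so the chord is horizontal of length exactly $({\it d}_m)_\infty = 2\a(m) \to \pi$. Thus $({\it d}_m)_\infty$ stays bounded, in fact bounded away from zero and from infinity.

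Next I would estimate the intrinsic length $({\it l}_m)_\infty=\I{-\a(m)}{\a(m)}\sqrt{1+(\tan x)^2}\,dx = \I{-\a(m)}{\a(m)}\f{1}{\cos x}\,dx = 2\I{0}{\a(m)}\sec x\,dx = 2\ln\left|\sec\a(m)+\tan\a(m)\right|$. As $\a(m)\to\frac{\pi}{2}$, both $\sec\a(m)$ and $\tan\a(m)$ tend to $+\infty$, so $({\it l}_m)_\infty\to+\infty$. (Concretely, with $\a(m)=\frac{\pi}{2}-\frac1m$ one has $\cos\a(m)=\sin\frac1m\sim\frac1m$, so $\sec\a(m)\sim m$ and $({\it l}_m)_\infty\sim 2\ln m\to\infty$.) Therefore $\mathcal{G}(m)_\infty=\f{({\it l}_m)_\infty}{({\it d}_m)_\infty}\geq \f{2\ln\left|\sec\a(m)+\tan\a(m)\right|}{\pi}\to\infty$, which is the claim.

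There is no real obstacle here: the only mild care needed is to note that the straight segment $(\G_m)_\infty$ joining the two endpoints is indeed the minimizing curve in the ambient sense relevant to the definition of $\mathcal{G}(\cdot)_\infty$ (for a planar limit the ``minimal surface'' degenerates to a region of the plane and piecewise geodesics on it are straight segments), and to confirm that $({\it l}_m)_\infty$ is measured along $\g_\infty$ rather than on the full curve so the arc-length integral is genuinely over $[-\a(m),\a(m)]$. Both are immediate from the setup preceding the lemma. I would present the two elementary computations of $({\it d}_m)_\infty$ and $({\it l}_m)_\infty$ and conclude by taking the limit $m\to\infty$.
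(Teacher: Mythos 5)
Your proof is correct and follows the same route as the paper, which simply notes that $({\it l}_m)_\infty\to\infty$ while $({\it d}_m)_\infty\to\pi$ and declares this ``easy to see''; you have merely supplied the explicit computations ($({\it d}_m)_\infty=2\a(m)$ by symmetry of $-\ln\cos x$, and $({\it l}_m)_\infty=2\ln\left|\sec\a(m)+\tan\a(m)\right|\sim 2\ln m$). No gaps.
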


\begin{proof}
Since $\g_\infty(x,0)=(x,-\ln(\cos(x))),$ then it is easy to see that
$$\lim_{m\to\infty}({\it l}_m)_\infty=\infty\quad\mbox{and}\quad \lim_{m\to\infty}({\it d}_m)_\infty=\pi,$$

which finishes the proof.
\end{proof}

\begin{Lemma}
\label{boundedG}
Under the same hypotheses in Theorem A, the isoperimetric ratio for the rescaled solution $\g_n(\p,0)$ is bounded for every $n\in\N.$
\end{Lemma}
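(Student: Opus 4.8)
The plan is to transfer the time-uniform bound on the isoperimetric ratio $\mathcal{G}$ obtained in Theorem \ref{evolutionradio2} to each rescaled solution $\g_n(\p,0)$, using the scaling invariance recorded in Remark \ref{invariante}. The key observation is that $\mathcal{G}(m,0)_n$ is the isoperimetric ratio of $\g_n$ evaluated at the specific pair of points $-\a(m),\a(m)$ at rescaled time $\bt=0$, which corresponds to the original curve $\g(\p,t_n)$ at the pair $-\a(m),\a(m)$ at time $t_n$.

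First I would invoke Remark \ref{invariante}: since $\psi$ and ${\it d}$ both scale linearly under the rescaling $\g_n(\cdot,\bt)=\l_n(O_n\g(\cdot,t)+B_n)$, the quotient is unchanged, so
\begin{equation*}
\mathcal{G}(m,0)_n=\f{\psi(m,0)_n}{({\it d}_m^{\,0})_n}=\f{\psi(m,t_n)}{{\it d}_m^{\,t_n}}=\mathcal{G}(m,t_n).
\end{equation*}
Next, for each fixed $n$ the pair $(-\a(m),\a(m))$ is just one admissible choice of points on $\g(\p,t_n)$, so $\mathcal{G}(m,t_n)$ is bounded above by the maximum over all pairs of the isoperimetric ratio of $\g(\p,t_n)$, which is exactly the quantity controlled in Theorem \ref{evolutionradio2}. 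That theorem gives a bound $\mathcal{G}(a,b,t)\leq \mathcal{G}(a,b,0)\exp(\mathcal{C}t)$ valid for the maximizing pair at each $t$; since $\w<\infty$ and $\mathcal{G}(\cdot,\cdot,0)$ is bounded (Mean Value Theorem, as noted there), the maximum of $\mathcal{G}$ over $[0,\w)$ is finite. Hence there is a constant $\mathcal{C}_0$, independent of $n$ and $m$, with $\mathcal{G}(m,t_n)\leq\mathcal{C}_0$, and therefore $\mathcal{G}(m,0)_n\leq\mathcal{C}_0$ for every $n$ and every $m$.

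I should be slightly careful about two technical points. One is that the bound in Theorem \ref{evolutionradio2} is stated for the maximizing pair $(a,b)$ at a given time, whereas here I need it for the particular non-maximizing pair $(-\a(m),\a(m))$; this is immediate since $\mathcal{G}(m,t_n)\le\max_{a,b}\mathcal{G}(a,b,t_n)$ and the right-hand side is what Theorem \ref{evolutionradio2} bounds. The other is that the constant $\mathcal{C}$ in \eqref{evolG2} depends only on $\sup_{t\in[0,\w)}|K(\p,t)|$ and $\inf_{t}L_t$, both of which are controlled under the standing hypotheses of Theorem A (the curvature bound is assumed, and $\g(\p,t)$ does not shrink to a point so $\inf_t L_t>0$), so $\mathcal{C}_0$ genuinely does not depend on $n$.

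The main obstacle — really the only subtlety — is making sure the uniformity in $n$ is not lost: the essential point is that the bound produced in Theorem \ref{evolutionradio2} is a bound on $\mathcal{G}$ for the \emph{original} flow over the whole interval $[0,\w)$, and since each rescaled ratio $\mathcal{G}(m,0)_n$ equals an original ratio $\mathcal{G}(m,t_n)$ with $t_n<\w$, all of them are dominated by that single finite bound. No further estimate is needed; the conclusion is just the combination of the invariance from Remark \ref{invariante} with Theorem \ref{evolutionradio2}.
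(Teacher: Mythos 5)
Your argument is exactly the paper's: the paper's proof of this lemma is the one-line observation that it follows directly from Theorem \ref{evolutionradio2} (the bound on $\mathcal{G}$ over $[0,\w)$) together with the scaling invariance of Remark \ref{invariante}, which is precisely the chain $\mathcal{G}(m,0)_n=\mathcal{G}(m,t_n)\le\max_{a,b}\mathcal{G}(a,b,t_n)\le\mathcal{C}_0$ you spell out. Your additional care about the non-maximizing pair and the uniformity of the constant in $n$ is a correct and welcome elaboration of what the paper leaves implicit.
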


\begin{proof}
The proof is direct of Lemma \ref{evolutionradio2} and Remark \ref{invariante}.
\end{proof}

Additionally, if we assume that the evolution of $\g(\p,t)$ forms a Type II singularity at $\w$, Proposition \ref{TypeII} implies that given $\e>0$ there exists $N\in\N$ such that $\forall n\geq N$ we have that $\g_n|_{\mathcal{K}_m}$ converges uniformly in $C^1$ to $\g_\infty|_{\mathcal{K}_m}$, i.e. given $m$ fixed we have
\begin{equation}
\label{gnconvergeginf}
\lim_{n\to\infty}\g_n|_{\mathcal{K}_m}=\g_\infty|_{\mathcal{K}_m}\quad \mbox{and} \quad \lim_{n\to\infty}\g_n'|_{\mathcal{K}_m}=\g_\infty'|_{\mathcal{K}_m}.
\end{equation}

Now, to understand the convergence of the sequence of isoperimetric ratios $\{\mathcal{G}_n\}_{n\in\N},$ we need to analyze the convergence of the sequence of minimal surfaces $\{X_n(\p,0)\}_{n\in\N}$ under the assumption that the Gaussian curvature of $X(\p,t)$ is uniformly bounded in time.

Theorem \ref{convergeofsurface} and Remark \ref{convergenceuptheboundary} state that the sequence of minimal surfaces $\{X_n(\p,0)\}_{n\in\N}$ converges to a planar surface up to its boundary $\g_n(\p,0)$ implying that the piecewise geodesic on the surface $(\G_m)_n(\p,0)$ converges ($C^1$ at least) to a planar curve $(\tilde{\G}_m)_\infty$ that joins the points $\g_\infty(-\a(m),0)$ with $\g_\infty(\a(m),0)$.

Moreover, Proposition \ref{curvaturetendzero} states that the limit curve $(\tilde{\G_m})_\infty$ is the straight line that joins the points $\g_\infty(-\a(m),0)$ with $\g_\infty(\a(m),0)$.

Therefore, we get
\begin{Proposition}
\label{convergencegeodesic}
The piecewise geodesic $(\G_m)_n(\p,0)\subset X_n(\p,0)$ that minimizes distance between $\g_n(-\a(m),0)$ and $\g_n(\a(m),0)$ converges ($C^1$ at least) to the straight line $(\G_m)_\infty(\p,0)$ that joins $\g_\infty(-\a(m),0)$ with $\g_\infty(\a(m),0).$
\end{Proposition}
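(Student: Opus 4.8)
The plan is to assemble the conclusion from three ingredients already established: (i) Theorem \ref{convergeofsurface} together with Remark \ref{convergenceuptheboundary}, which give that the rescaled minimal surfaces $X_n(\p,0)$ converge, smoothly and up to the boundary, to a planar minimal surface; (ii) the $C^1$ convergence of the boundary curves $\g_n|_{\mathcal{K}_m}\to\g_\infty|_{\mathcal{K}_m}$ from \eqref{gnconvergeginf}, which in particular fixes the endpoints $\g_n(\pm\a(m),0)\to\g_\infty(\pm\a(m),0)$; and (iii) Proposition \ref{curvaturetendzero}, which says the curvature of the rescaled piecewise geodesics $(\G_m)_n(\p,0)$ tends to $0$. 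First I would record that, since $X_n(\p,0)$ converges in $C^{1,\a}$ (indeed smoothly by regularity theory) up to the boundary to a planar domain, and the two endpoints on $\partial X_n(\p,0)$ converge to $\g_\infty(\pm\a(m),0)$, the minimizing piecewise geodesics $(\G_m)_n(\p,0)$ — each contained in $X_n(\p,0)$ and each of length $({\it d}^{\,0}_m)_n$ bounded uniformly in $n$ (boundedness of the isoperimetric ratio from Lemma \ref{boundedG}, together with the uniform bound on $({\it l}^{\,0}_m)_n$ coming from the $C^1$ convergence of $\g_n$) — form an equicontinuous, uniformly bounded family, hence by Arzelà--Ascoli a subsequence converges in $C^0$ to a rectifiable curve $(\tilde\G_m)_\infty$ joining $\g_\infty(-\a(m),0)$ to $\g_\infty(\a(m),0)$ and lying in the limit planar surface.

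Next I would upgrade this to $C^1$ convergence and identify the limit. The geodesic equation for $(\G_m)_n$ on $X_n$ is \eqref{tangencial}: the tangential part of the second derivative vanishes and the normal part is controlled by the second fundamental form of $X_n(\p,0)$, whose norm satisfies $|A_n|^2=2|K_n|\to 0$ pointwise by \eqref{gaussiancurvature}. Hence the geodesic curvature of $(\G_m)_n$ inside $X_n$ is zero and its full curvature, being purely normal, is bounded by $\sqrt{|K_n|}\to 0$ (this is exactly Proposition \ref{curvaturetendzero}). Feeding the smooth convergence $X_n\to$ (planar surface) into the ODE system for $(u_n(x),v_n(x))$, one gets a uniform bound on $(\G_m)_n''$ and therefore $C^{1,\a}$ (in fact $C^\infty$ on compact parameter sets) precompactness; the limit curve $(\tilde\G_m)_\infty$ then satisfies the limiting equation, which — since the ambient surface is planar and the curvature bound forces zero curvature in the limit — is just $\G''=0$. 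A curve of zero curvature joining two prescribed points is the straight segment, so $(\tilde\G_m)_\infty=(\G_m)_\infty$, the Euclidean segment between $\g_\infty(-\a(m),0)$ and $\g_\infty(\a(m),0)$. Since every subsequential limit is this same segment, the whole sequence $(\G_m)_n(\p,0)$ converges (at least in $C^1$) to $(\G_m)_\infty(\p,0)$.

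The main obstacle I anticipate is the uniformity up to the boundary: a priori the minimizing piecewise geodesics could drift or the convergence $X_n\to$ plane could degenerate near $\partial X_n$. This is precisely why Remark \ref{convergenceuptheboundary} is invoked — it promotes the interior $C^{1,\a}$ convergence to convergence up to the boundary, using the standing assumption that $X(\p,t)$ is $C^{1,\a}$ in time and boundary regularity theory for minimal surfaces. Granting that, the estimates above are routine; the only point requiring care is to check that the endpoints of $(\G_m)_n$ — which are the endpoints of the compact arc $\mathcal{K}_m$ on $\g_n$ — really do converge to $\g_\infty(\pm\a(m),0)$, which is immediate from \eqref{gnconvergeginf}, and that the ``piecewise'' nature of the geodesic (finitely many geodesic segments meeting at break points on $X_n$) passes to the limit, the break points accumulating only where the limit segment could fail to be smooth — but since the limit is a single straight segment, any such break points disappear in the limit.
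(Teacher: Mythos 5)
Your proposal is correct and follows essentially the same route as the paper: convergence of the rescaled minimal surfaces up to the boundary (Theorem \ref{convergeofsurface}, Remark \ref{convergenceuptheboundary}) plus the endpoint convergence from \eqref{gnconvergeginf} gives a limiting planar curve, and Proposition \ref{curvaturetendzero} identifies it as the straight segment. You supply more detail than the paper does (Arzel\`a--Ascoli, the ODE compactness for the geodesic system, uniqueness of subsequential limits); the only small slip is that Lemma \ref{boundedG} bounds $({\it d}^{\,0}_m)_n$ from \emph{below}, while the needed upper bound follows simply from minimality, $({\it d}^{\,0}_m)_n\leq({\it l}^{\,0}_m)_n$, which you already have.
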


Thus, we prove
\begin{Lemma}
\label{radioconvergencia}
Under the same hypotheses of Theorem A, the isoperimetric ratio $\mathcal{G}(m,0)_n$ converges to $\mathcal{G}(m)_\infty$ as $n$ converges to infinity.
\end{Lemma}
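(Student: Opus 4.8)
The plan is to show that both the numerator and the denominator of $\mathcal{G}(m,0)_n$ converge, as $n\to\infty$, to the corresponding quantities defining $\mathcal{G}(m)_\infty$, and then conclude by the algebra of limits. Recall that
$$\mathcal{G}(m,0)_n=\f{\psi(m,0)_n}{({\it d}_m^{\,0})_n},\qquad \mathcal{G}(m)_\infty=\f{({\it l}_m)_\infty}{({\it d}_m)_\infty},$$
so there are three pieces to control: the lengths $({\it d}_m^{\,0})_n$ of the minimizing piecewise geodesics, the intrinsic arc-lengths $({\it l}_m^{\,0})_n$ along $\g_n$, and the total lengths $L_n$ which appear through $\psi(m,0)_n=\f{L_n}{\pi}\sin\big(\f{\pi({\it l}_m^{\,0})_n}{L_n}\big)$.

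First I would handle the denominator. By Proposition \ref{convergencegeodesic} the piecewise geodesic $(\G_m)_n(\p,0)\subset X_n(\p,0)$ converges in $C^1$ (at least) to the straight segment $(\G_m)_\infty(\p,0)$ joining $\g_\infty(-\a(m),0)$ and $\g_\infty(\a(m),0)$; since $C^1$ convergence of curves on a fixed parameter interval $[0,1]$ implies convergence of their lengths (the length is $\int_0^1\|\d_x(\G_m)_n\|\,dx$ and the integrands converge uniformly), we get $({\it d}_m^{\,0})_n\to({\it d}_m)_\infty$. For the numerator I would use \eqref{gnconvergeginf}: on the fixed compact set $\mathcal{K}_m=[-\a(m),\a(m)]\times\{0\}$ the rescaled curves $\g_n|_{\mathcal{K}_m}$ converge in $C^1$ to $\g_\infty|_{\mathcal{K}_m}$, so the arc-length of the piece of $\g_n$ between $-\a(m)$ and $\a(m)$ converges to the arc-length $({\it l}_m)_\infty$ of the corresponding piece of $\g_\infty$; that is, $({\it l}_m^{\,0})_n\to({\it l}_m)_\infty$.

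Next I would address the total length $L_n$, which is the one genuinely delicate point: the Grim Reaper is a noncompact, infinite-length curve, so $L_n\to\infty$ as $n\to\infty$ (this follows because $\g_n$ converges to $\g_\infty$ on every compact subset of $(-\f{\pi}{2},\f{\pi}{2})$ and the Grim Reaper has infinite length there, forcing $L_n$ — the length of all of $\g_n$ — to be eventually larger than any prescribed bound; alternatively one may invoke that $L_n=L_t/\l_n^{-1}$ type scaling together with the Type II blow-up rate). Granting $L_n\to\infty$, together with $({\it l}_m^{\,0})_n\to({\it l}_m)_\infty<\infty$, the argument $\f{\pi({\it l}_m^{\,0})_n}{L_n}\to 0$, and since $\f{\sin u}{u}\to 1$ as $u\to 0$ we obtain
$$\psi(m,0)_n=\f{L_n}{\pi}\sin\lf\f{\pi({\it l}_m^{\,0})_n}{L_n}\rg=({\it l}_m^{\,0})_n\cdot\f{\sin\lf\f{\pi({\it l}_m^{\,0})_n}{L_n}\rg}{\f{\pi({\it l}_m^{\,0})_n}{L_n}}\longrightarrow ({\it l}_m)_\infty\cdot 1=({\it l}_m)_\infty.$$
Finally, since $({\it d}_m)_\infty=\pi>0$ (as noted in the proof of Lemma \ref{radioinf}) the denominators are bounded away from zero for large $n$, so
$$\mathcal{G}(m,0)_n=\f{\psi(m,0)_n}{({\it d}_m^{\,0})_n}\longrightarrow \f{({\it l}_m)_\infty}{({\it d}_m)_\infty}=\mathcal{G}(m)_\infty,$$
which is the claim. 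The main obstacle is making rigorous the passage from $C^1$ convergence on compact subsets of the open interval $(-\f{\pi}{2},\f{\pi}{2})$ to the statement $L_n\to\infty$ and, more importantly, to the convergence of the \emph{local} arc-length $({\it l}_m^{\,0})_n$ over the \emph{fixed} compact interval $[-\a(m),\a(m)]$ — here one must be careful that $\a(m)<\f{\pi}{2}$ is strictly interior, so the compact set $\mathcal{K}_m$ is indeed contained in the region of uniform $C^1$ convergence, and the endpoints $\g_n(\pm\a(m),0)$ converge to $\g_\infty(\pm\a(m),0)$, which is also what is needed to match the endpoints of the limiting geodesic segment in Proposition \ref{convergencegeodesic}.
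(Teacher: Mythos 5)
Your proposal is correct and follows essentially the same route as the paper: convergence of $({\it l}_m^{\,0})_n$ from the uniform $C^1$ convergence \eqref{gnconvergeginf}, convergence of $({\it d}_m^{\,0})_n$ from Proposition \ref{convergencegeodesic}, the rewriting $\psi(m,0)_n=({\it l}_m^{\,0})_n\cdot\f{\sin u_n}{u_n}$ with $u_n=\f{\pi({\it l}_m^{\,0})_n}{L_n}\to 0$, and the algebra of limits. Your added justification that $L_n\to\infty$ (via the infinite length of the Grim Reaper on compact exhaustions) is a welcome elaboration of a step the paper merely asserts.
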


\begin{proof}
Firstly, following \eqref{gnconvergeginf} we have
\begin{equation*}
\lim_{n\to\infty}\lf{\it l}^0_{m}\rg_n=\lim_{n\to\infty}\I{-\a(m)}{\a(m)}\|\g_n'\|dx=\I{-\a(m)}{\a(m)}\lim_{n\to\infty}\|\g_n'\|dx=\lf{\it l}_{m}\rg_\infty.
\end{equation*}

Moreover, note that $L_n$ tends to infinity when $n$ tends to infinity. Thus, we have
\begin{equation}
\label{limite psi}
\begin{split}
\lim_{n\to\infty}\psi(m,0)_n&=\lim_{n\to\infty}\f{L_n}{\pi}\sin\lf\f{\pi ({\it l}_{m}^{\,0})_n}{L_n}\rg\\
&=\lim_{n\to\infty}({\it l}_{m}^{\,0})_n\p\f{L_n}{\pi ({\it l}_{m}^{\,0})_n}\sin\lf\f{\pi ({\it l}_{m}^{\,0})_n}{L_n}\rg\\
&=({\it l}_{m})_\infty.
\end{split}
\end{equation}

On the other hand, using Proposition \ref{convergencegeodesic}, for fixed $m$ we have that $(\G_m)_n'$ uniformly converges to $(\G_m)_\infty'$, then
\begin{equation*}
\lim_{n\to\infty}\lf{\it d}^0_{m}\rg_n=\lim_{n\to\infty}\I{0}{1}\|(\G_m)_n'\|dx=\I{0}{1}\lim_{n\to\infty}\|(\G_m)_n'\|dx=\lf{\it d}_{m}\rg_\infty.
\end{equation*}

Thus, we obtain
$$\lim_{n\to\infty}\mathcal{G}(m,0)_n=\lim_{n\to\infty}\f{\psi(m,0)_n}{({\it d}_m^{\,0})_n}=\f{({\it l}_{m})_\infty}{\lf{\it d}_{m}\rg_\infty}=\mathcal{G}(m)_\infty.$$
\end{proof}

Therefore, we conclude
\begin{proof}[Proof of Theorem A]
Assume that $\g(\p,t)$ satisfies the hypothesis of Theorem A and as $t\to \w$ forms a singularity Type II.

Since for every $n$ and $m$ we have
\begin{equation}
\label{paso1caso2}
\left|\mathcal{G}(m)_\infty\right|\leq\left|\mathcal{G}(m)_\infty-\mathcal{G}(m,0)_n\right|+\left|\mathcal{G}(m,0)_n\right|,
\end{equation}

Lemma \ref{radioinf}, \ref{boundedG} and \ref{radioconvergencia} give us a contradiction for large enough $m$.
\end{proof}

\bigskip
\bigskip
\medskip
\begin{flushright}
{\it Departamento de Matem\'aticas, Universidad de Chile,\\
Las Palmeras 3425, Casilla 653, Santiago, Chile}\\
\emph{E-mail:} karence1988@gmail.com
\end{flushright}
\end{document}